\documentclass[11pt]{article}

\usepackage[a4paper,margin=30mm]{geometry}
\usepackage[T1]{fontenc}
\usepackage[utf8]{inputenc}
\usepackage{lmodern}
\usepackage{microtype}
\usepackage{amsmath,amssymb,amsthm,mathtools}
\usepackage{booktabs,array}
\usepackage{xcolor}
\usepackage{tikz}
\usetikzlibrary{matrix,arrows,positioning,fit,patterns}
\usepackage{enumitem}
\usepackage{hyperref}
\usepackage[capitalise]{cleveref}

\definecolor{linkblue}{RGB}{22,76,130}
\definecolor{shadegray}{RGB}{224,228,232}
\definecolor{diagred}{RGB}{210,92,82}
\hypersetup{colorlinks=true,linkcolor=linkblue,citecolor=linkblue,urlcolor=linkblue,
  pdftitle={Coincidences and Growth of Boxed Mesh Patterns}}
\setlist{nosep}
\newtheorem{theorem}{Theorem}[section]
\newtheorem{proposition}[theorem]{Proposition}
\newtheorem{lemma}[theorem]{Lemma}
\newtheorem{corollary}[theorem]{Corollary}
\theoremstyle{definition}
\newtheorem{definition}[theorem]{Definition}
\newtheorem{conjecture}[theorem]{Conjecture}

\theoremstyle{remark}
\newtheorem{remark}[theorem]{Remark}

\newcommand{\st}{\operatorname{st}}
\newcommand{\Av}{\operatorname{Av}}
\newcommand{\Boxp}{\operatorname{Box}}
\newcommand{\B}{\Boxp}
\newcommand{\boxit}[1]{\boxed{\mathstrut #1}}
\newcommand{\SB}{\mathsf{SB}}
\newcommand{\one}{\mathbf 1}
\newcommand{\Coin}{\mathrel{\equiv_{\rm c}}}

\newcommand{\pattern}[4]{%
  \raisebox{0.6ex}{%
    \begin{tikzpicture}[scale=0.35,baseline=(current bounding box.center),#1]
      \foreach \x/\y in {#4} \fill[gray!50] (\x,\y) rectangle +(1,1);
      \draw (0.01,0.01) grid (#2+0.99,#2+0.99);
      \foreach \x/\y in {#3} \filldraw (\x,\y) circle (6pt);
    \end{tikzpicture}}}

\newcommand{\patternhighlight}[5]{%
  \raisebox{0.6ex}{%
    \begin{tikzpicture}[scale=0.35,baseline=(current bounding box.center),#1]
      \foreach \x/\y in {#4} \fill[gray!50] (\x,\y) rectangle +(1,1);
      \foreach \x/\y in {#5} \fill[diagred!70] (\x,\y) rectangle +(1,1);
      \foreach \x/\y in {#5}
        \fill[pattern=north east lines] (\x,\y) rectangle +(1,1);
      \draw (0.01,0.01) grid (#2+0.99,#2+0.99);
      \foreach \x/\y in {#5} \draw[line width=1.1pt] (\x,\y) rectangle +(1,1);
      \foreach \x/\y in {#3} \filldraw (\x,\y) circle (6pt);
    \end{tikzpicture}}}

\title{Coincidences and Growth of Boxed Mesh Patterns}
\author{Sergey Kitaev$^{1}$, Dun Qiu$^{2}$, and Chao Xu$^{2}$\\[5pt]
\small $^{1}$Department of Mathematics and Statistics, University of Strathclyde,\\[-1pt]
\small Glasgow G1 1XH, United Kingdom\\[-1pt]
\small \texttt{sergey.kitaev@strath.ac.uk}\\[3pt]
\small $^{2}$Center for Combinatorics, LPMC, Nankai University,\\[-1pt]
\small Tianjin 300071, P.~R. China\\[-1pt]
\small \texttt{qiudun@nankai.edu.cn}, \texttt{xclyon1@gmail.com}}

\begin{document}
\maketitle

\begin{abstract}
A boxed  mesh pattern is a mesh pattern whose selected entries lie in an empty
axis-parallel rectangle. We classify coincidences of boxed patterns with classical and vincular
patterns, exhibit a genuinely bivincular coincidence, and prove that no
boxed--bivincular coincidence occurs for patterns of length at least five.
Together with known results, this shows that every boxed pattern of length at
least five has factorial growth and hence fails the Stanley--Wilf property.
At length four, one exceptional orbit is enumerated by the semi-Baxter
numbers, while the remaining exceptional orbit, $\{2143,3412\}$, is
unresolved; we conjecture that it has factorial growth.

For $\Boxp(123)$, we derive an exact maximum-insertion identity and prove the
subfactorial upper bound $2^{5n}n^{\beta n}$, where  $\beta=\log_2(2\cos(\pi/7))<0.85$. A closed enumeration remains open. We also prove a general first-moment
formula for boxed mesh patterns that depends only on the length of the
underlying pattern; in particular, the expected number of $\Boxp(123)$
occurrences in a uniformly random permutation of length $n$ is asymptotic to
$n\log n$.

For $\Boxp(12)$, we identify the occurrence statistic with the up-degree in
the strong Bruhat order, obtaining its maximum, its mean, and an exact
insertion identity for the distribution polynomials. We conjecture that the
coefficients of these polynomials are unimodal.
\end{abstract}

\medskip
\noindent\textbf{Keywords.} mesh pattern; boxed pattern; Stanley--Wilf property;
vincular pattern; bivincular pattern; semi-Baxter permutation;
strong Bruhat order.

\smallskip
\noindent\textbf{2020 Mathematics Subject Classification.} 05A05, 05A15, 05A16

\section{Introduction}
Let $\mathfrak S_n$ denote the set of permutations of
$[n]=\{1,2,\ldots,n\}$, with $\mathfrak S_0=\{\varepsilon\}$. A
\emph{classical pattern} of length $k$ is a permutation
$p=p_1p_2\cdots p_k\in\mathfrak S_k$. An occurrence of $p$ in a
permutation $\pi=\pi_1\pi_2\cdots\pi_n\in\mathfrak S_n$ is a subsequence
$\pi_{i_1}\pi_{i_2}\cdots\pi_{i_k}$, where
$i_1<i_2<\cdots<i_k$, that is order-isomorphic to $p$; that is, $\pi_{i_r}<\pi_{i_s}$ if and only if $p_r<p_s$ for $1\le r,s\le k$.
For example, the pattern $132$ occurs twice in the permutation $24513$,
namely as the subsequences $243$ and $253$.

For a pattern $p$, defined in any specified sense, let
$\Av_n(p)\subseteq\mathfrak S_n$ denote the set of permutations that
avoid $p$, that is, contain no occurrence of $p$.  The \emph{Stanley--Wilf limit} of a pattern \(p\), when it exists, is
\[
 L(p)=\lim_{n\to\infty}|\Av_n(p)|^{1/n}.
\]
Arratia \cite{Arratia1999} proved that an exponential upper bound for
$|\Av_n(p)|$ for a classical pattern $p$ implies the existence of this limit, and Marcus and Tardos
\cite{MarcusTardos2004} proved the required upper bound.  Thus the
Stanley--Wilf theorem says that $L(p)$ exists and is finite for every fixed
classical pattern $p$.  For any pattern $p$, not necessarily classical, we shall say that the
\emph{Stanley--Wilf property} holds if
\[
  \limsup_{n\to\infty}|\Av_n(p)|^{1/n}<\infty.
\]
We say that a positive sequence $(a_n)$ has \emph{factorial growth} if $\log a_n=\Theta(n\log n)$,
or, equivalently, if $a_n=n^{\Theta(n)}$. If $p$ is a consecutive pattern of length at least three, meaning that the
entries in an occurrence must occupy consecutive positions, Elizalde
\cite[Theorem~4.1]{Elizalde2006} proved that there exist constants
$0<c_p<d_p<1$ such that, for all sufficiently large $n$,
\[
  c_p^n n!<|\Av_n(p)|<d_p^n n!.
\]
Consequently its $n$th root tends to infinity.  (Consecutive patterns of
length two are the exceptional trivial cases, with one avoider.)  Vincular patterns, defined in Section~\ref{sec:definitions}, already exhibit
both types of behaviour in length three: for example,
$1\underline{23}$, also written $1\text{-}23$, has the Bell numbers as its
avoidance sequence, whereas some other one-bond patterns have Catalan
avoidance sequences \cite{Claesson2001}.  Since
$B_n^{1/n}\sim n/(e\log n)$, Bell growth also violates the Stanley--Wilf
property.

Mesh patterns, defined in Section~\ref{sec:definitions},  were introduced by Br\"and\'en and Claesson
\cite{BrandenClaesson2011} as a flexible language for imposing geometric
conditions on occurrences of classical permutation patterns.  They have since
developed in several complementary directions: shading and coincidence
criteria \cite{Tenner2013,ClaessonTennerUlfarsson2015}, systematic and
algorithmic classification \cite{HilmarssonEtAl2015,BeanEtAl2023}, exhaustive
generation \cite{HartungEtAl2022}, asymptotic containment
\cite{GovcSmith2022}, and the study of distributions
\cite{KitaevZhang2019,KitaevZhangZhang2020,LvKitaev2025}.

Two patterns are called \emph{coincident} when they impose exactly the same
avoidance condition: every permutation avoids one if and only if it avoids the
other.  Thus coincidence identifies the actual avoidance classes, not merely
their counting sequences; a formal definition is given in
\cref{sec:definitions}.

Avgustinovich, Kitaev and Valyuzhenich \cite{AKV2013} introduced \emph{boxed
patterns}: all $(k-1)^2$ internal cells of a mesh pattern of length $k$ are
shaded.  They showed, among other things, that the permutations avoiding boxed $132$
are exactly those avoiding classical $132$ and are therefore counted by the
Catalan numbers.  They also
showed that an analogue of the Erd\H{o}s--Szekeres theorem fails for boxed
patterns of length greater than two, and they enumerated permutations
avoiding two or more boxed patterns of length three, where generalized
Catalan numbers appear.  More strikingly,
they proved factorial lower bounds for many boxed patterns, showing that the
Stanley--Wilf conclusion for classical patterns does not extend to arbitrary
mesh patterns.  The simplest of their constructions doubles each letter:
\[
 \delta(\pi_1\cdots\pi_m)
 =(2\pi_1)(2\pi_1-1)\cdots(2\pi_m)(2\pi_m-1).
\]
Every pair is a descending pair of consecutive values, and the empty-rectangle
condition shows that $\delta(\pi)$ avoids $\Boxp(123)$: the unselected sibling
of the middle member of any increasing triple lies inside its rectangle.  This
injects $\mathfrak S_m$ into $\Av_{2m}(\Boxp(123))$.  The same observation
and its complement handle any boxed pattern having a monotone interval factor
of length three.  More generally, factorial growth is inherited from any boxed
interval factor; see Proposition \ref{prop:interval-factor}.

Their general proof calls $\tau=\tau_1\cdots\tau_k$ \emph{good} when some
$1<i<k$ has $1<\tau_i<k$ and the factor
$\tau_{i-1}\tau_i\tau_{i+1}$ is nonmonotone.  In the representative $213$
case, every letter of an arbitrary base permutation is replaced by an
increasing pair of consecutive values.  Whichever sibling plays $\tau_i$, its
mate either lies in the forbidden rectangle or forces the adjacent member of
the pattern to be the mate, contradicting the required order.  Reverse and
complement deal with the other three nonmonotone triples.  If $\tau$ is not
good, its entries split into monotone blocks around its minimum and maximum.
Either it has a monotone factor of length three, in which case the
$\Boxp(123)$ or $\Boxp(321)$ construction applies, or it is one of the four
exceptional permutations below.  Together these ideas give factorial lower
bounds for twenty of the twenty-four underlying permutations of length four.
Their argument left
\begin{equation}\label{eq:four}
 2143,\qquad 3412,\qquad 2413,\qquad 3142,
\end{equation}
and treated them as a single symmetry class.

It is important that the same theorem already settles every larger length.
More precisely, \cite[Theorem~3]{AKV2013} proves
\begin{equation*}\label{eq:akv-factorial}
 |\Av_n(\Boxp(\tau))|\ge \left\lfloor\frac n2\right\rfloor!
\end{equation*}
for every $\tau\in\mathfrak S_k$, $k\ge4$, other than the four permutations in
\eqref{eq:four}.  Consequently every boxed pattern of length at least five
fails the Stanley--Wilf property.  Thus the obstruction to coincidence with a
classical, vincular or bivincular pattern in lengths at least five is
accompanied by a much stronger growth statement, but the two facts have
logically independent proofs: noncoincidence alone would not imply factorial
growth.

Three questions from the final section of \cite{AKV2013} are directly relevant
here: determine the Stanley--Wilf behaviour of the four patterns in
\eqref{eq:four}; enumerate $\Boxp(123)$; and decide when a boxed pattern can be
coincident with a classical pattern.  The last question has since been settled:
coincident mesh patterns have the same underlying permutation
\cite[Lemma~4.1]{ClaessonTennerUlfarsson2015}, and the enclosed-diagonal
criterion of Tenner \cite{Tenner2013} classifies when a mesh is superfluous.
For boxed patterns the complete list is recalled in
Proposition \ref{prop:classical-coincidence}.  The first two questions remain the main
enumerative motivation for this paper.

Boxed patterns are also called \emph{frame patterns}; under that name their
occurrence statistics have been studied in the cycle structure of
permutations by Jones, Kitaev and Remmel \cite{JonesKitaevRemmel2015}.  On
the algorithmic side, deciding boxed containment has been studied by Bruner
and Lackner \cite{BrunerLackner2013}, by Cho, Na and Sim \cite{ChoNaSim2015}
and by Amit, Bille, Hagge Cording, Li G\o rtz and Wedel Vildh\o j
\cite{AmitEtAl2016}; see Remark \ref{rem:matching}.

The four exceptional cases separate in two ways.  First, the four patterns in
\eqref{eq:four} are treated together in \cite[p.~48]{AKV2013}, but they do
not form a single symmetry class: reverse, complement and inverse split
\eqref{eq:four} into
\begin{equation}\label{eq:orbits}
 \{2143,3412\}\qquad\text{and}\qquad\{2413,3142\}.
\end{equation}
The two avoidance sequences are different: at size five they have respectively
$106$ and $104$ elements.  Second, the latter orbit is not a counterexample to
the Stanley--Wilf property.  The key observation for this orbit, that we prove in Lemma~\ref{lem:tightening}, is the
coincidence
\[
  \Boxp(2413)\Coin 2\underline{41}3,
\]
where the underline requires the entries playing $4$ and $1$ to be adjacent;
the equivalent dash notation is $2$-$41$-$3$.
The right-hand class consists of the semi-Baxter permutations, completely
enumerated by Bouvel, Guerrini, Rechnitzer and Rinaldi
\cite{BouvelEtAl2018}.  This settles one of the two orbits in
\eqref{eq:orbits} and explains the value $104$ at size five.  We also classify
all coincidences of a boxed pattern with a nonclassical vincular pattern: none
exist in lengths five or more.  Moreover, $\Boxp(2413)$ is coincident with a
genuinely bivincular pattern, and we prove that no boxed pattern of length at
least five is coincident with any bivincular pattern.  

For the unresolved orbit $\{2143,3412\}$, we show that replacing every entry
$x$ by the increasing pair $(2x-1)(2x)$ preserves the number of boxed $2143$
occurrences. We also conjecture that the avoidance sequence associated with
this orbit has factorial growth; see Conjectures~\ref{conj:interleave}
and~\ref{conj:2143-factorial} and the accompanying discussion in
Section~\ref{sec:remaining}.

Our second focus is boxed $123$.  The sequence is recorded as OEIS A201168
\cite{OEIS201168}, but no generating function or coefficient formula is given
there or, to our knowledge, elsewhere in the literature.  We give an exact
maximum-insertion identity and show that its prefix condition is itself
enumerated by $(t-1)!$; this gives the five new terms $a_{13},\ldots,a_{17}$.
A weighted halving argument also gives
\[
 |\Av_n(\Boxp(123))|\le 2^{5n}n^{\beta n},
 \qquad \beta=\log_2\!\left(2\cos\frac{\pi}{7}\right)<0.85,
\]
so the avoidance class is subfactorial even though it does not have the
Stanley--Wilf property.  The identity does not close in the total numbers,
and a closed enumeration remains open.  On the distributional side, we prove
that the expected number of occurrences of $\Boxp(\tau)$ in a uniformly random
permutation depends only on the length of $\tau$, not on $\tau$ itself.  For
boxed $123$ the exact mean is $(n+2)H_n-3n\sim n\log n$, so a random
permutation contains on average on the order of $n\log n$ such occurrences.

A complementary distribution question concerns the boxed pattern $12$.  A
recent sequence of papers has completed the classification of length-two mesh
patterns by distribution-equivalence and Wilf-equivalence.  Su, Kitaev and
Zhang \cite{SuKitaevZhang2026} obtained a near-classification; Fang, Fu,
Kitaev, Li, Su and Sun \cite{FangEtAl2026} reduced both classifications to a
single remaining case; and Zhang and Zhao \cite{ZhangZhao2026} settled that
case.  The first of these papers records the initial distribution polynomials
for boxed $12$ but leaves a general formula open.  We observe that the number of
boxed $12$ occurrences is exactly the up-degree of a permutation in the strong
Bruhat order.  The standard Bruhat cover criterion, also discussed in
mesh-pattern language by Bouvel, Ferrari and Tenner
\cite{BouvelFerrariTenner2024}, therefore brings the results of Adin and
Roichman \cite{AdinRoichman2006} into the problem.  In particular, it gives
the maximum number of occurrences, the number of maximizers, and the exact
mean.  We add a short direct proof of the mean and an insertion identity that
isolates the catalytic prefix statistic needed for the full distribution.

The paper is organized as follows. In Section~\ref{sec:definitions}, we
introduce the notation and recall the preliminary results used throughout the
paper. In Section~\ref{sec:interval}, we translate boxed containment into
consecutive containment after restriction to an interval of values. In
Section~\ref{sec:length4}, we correct the length-four symmetry reduction,
identify the semi-Baxter orbit, and record the resulting global growth
classification. In Section~\ref{sec:coincidence}, we classify classical and
vincular coincidences, exhibit a genuinely bivincular coincidence, and prove
that no boxed--bivincular coincidence occurs in length at least five. In
Section~\ref{sec:123}, we give an exact identity, a subfactorial upper bound,
and a general first-moment formula motivated by boxed $123$. In
Section~\ref{sec:remaining}, we discuss the remaining orbit and state two
conjectures. Finally, in Section~\ref{sec:box12}, we connect boxed $12$ with
covers in the strong Bruhat order and record the resulting distributional
information.

\section{Preliminaries}\label{sec:definitions}
If $w=w_1\cdots w_k$ is a word with distinct letters, its
\emph{standardization}, also called its \emph{reduced form}, is the unique
permutation $\st(w)\in\mathfrak S_k$ whose entries have the same relative
order as those of $w$. Recall that a permutation $\pi$ \emph{contains} the
classical pattern $\tau\in\mathfrak S_k$ if $\st(\pi_{i_1}\cdots\pi_{i_k})=\tau$ for some $i_1<\cdots<i_k$; otherwise, $\pi$ avoids $\tau$.

A \emph{mesh pattern} of length $k$ is a pair $P=(\tau,R)$, where
$\tau\in\mathfrak S_k$ and $R\subseteq\{0,1,\ldots,k\}^2$. Its diagram consists of the plot of $\tau$, with points
$(r,\tau_r)$ for $1\le r\le k$, together with a shaded unit square $[a,a+1]\times[b,b+1]$ for each $(a,b)\in R$. For example,
\[
  P=\bigl(2413,\{(0,1),(2,2),(3,0)\}\bigr)
  =
  \pattern{scale=0.52}{4}
    {1/2,2/4,3/1,4/3}
    {0/1,2/2,3/0}.
\]

Let $\pi=\pi_1\cdots\pi_n\in\mathfrak S_n$, and suppose that
$i_1<\cdots<i_k$ determine a classical occurrence of $\tau$ in $\pi$.
Let $v_1<\cdots<v_k$ be the selected values $\pi_{i_1},\ldots,\pi_{i_k}$ written in increasing
order, and set $i_0=v_0=0$ and $i_{k+1}=v_{k+1}=n+1$. The selected entries form an \emph{occurrence} of the mesh pattern
$P=(\tau,R)$ if, for every $(a,b)\in R$, there is no plot point
$(j,\pi_j)$ satisfying $i_a<j<i_{a+1}$ and $v_b<\pi_j<v_{b+1}$. Thus each shaded square prescribes a rectangular region that must contain
no other point of the plot of $\pi$. When $R=\varnothing$, this definition
reduces to classical pattern containment.

The \emph{boxed pattern} with underlying permutation $\tau\in\mathfrak S_k$
is $\Boxp(\tau)=\bigl(\tau,\{1,\ldots,k-1\}^2\bigr)$. It is also denoted by a box drawn around the permutation, such as
$\boxit{123}$, or by displaying its mesh:
\[
 \Boxp(123)=\boxit{123}=
 \pattern{scale=0.52}{3}{1/1,2/2,3/3}{1/1,1/2,2/1,2/2}.
\]
We use the notation $\Boxp(123)$ throughout, as it is more convenient in longer formulas.

For a permutation \(\pi=\pi_1\cdots\pi_n\in \mathfrak S_n\), its \emph{reverse},
\emph{complement}, and \emph{inverse} are defined, respectively, by $\pi^{\mathrm r}_i=\pi_{n+1-i}$, $\pi^{\mathrm c}_i=n+1-\pi_i$,
 $\pi^{-1}_{\pi_i}=i$ for $1\leq i\leq n$.
Geometrically, these operations reflect the plot in a vertical line, reflect
it in a horizontal line, and reflect it in the main diagonal, respectively.
They act on mesh patterns by applying the same reflection to both the
underlying permutation and the shaded cells.  In particular, if
\(P=(\tau,R)\) has length \(k\), then
\[
\begin{aligned}
 P^{\mathrm r}
   &=\bigl(\tau^{\mathrm r},
       \{(k-a,b):(a,b)\in R\}\bigr),\\
 P^{\mathrm c}
   &=\bigl(\tau^{\mathrm c},
       \{(a,k-b):(a,b)\in R\}\bigr),\\
 P^{-1}
   &=\bigl(\tau^{-1},
       \{(b,a):(a,b)\in R\}\bigr).
\end{aligned}
\]
These symmetries preserve containment, avoidance, and coincidence.  They also
preserve boxed patterns: $\Boxp(\tau)^{\mathrm r}=\Boxp(\tau^{\mathrm r})$, $\Boxp(\tau)^{\mathrm c}=\Boxp(\tau^{\mathrm c})$, and $\Boxp(\tau)^{-1}=\Boxp(\tau^{-1})$.

If $X\subseteq[k-1]$, a \emph{vincular pattern} $(\tau,X)$ requires
$i_{x+1}=i_x+1$ for every $x\in X$.  We indicate bonds by underlining; thus
$2\underline{41}3$ is the pattern for which the entries playing $4$ and $1$
are adjacent.  The notation $2$-$41$-$3$ is equivalent.  A
\emph{bivincular pattern} $(\tau,X,Y)$ additionally requires that the selected
values of ranks $y$ and $y+1$ be consecutive for each $y\in Y$
\cite{BousquetMelouEtAl2010}.  Vincular and
bivincular patterns are mesh patterns: position bonds shade complete columns,
and value bonds shade complete rows.  For background on all of these families
see Kitaev \cite{Kitaev2011} and the survey of Steingr\'imsson
\cite{Steingrimsson2010}.

For any pattern $P$,  let $\Av(P)=\bigcup_{n\ge0}\Av_n(P)$.  Two patterns $P$ and $Q$ are
\emph{coincident}, written $P\Coin Q$, if $\Av_n(P)=\Av_n(Q)$ for every $n\ge0$. Equivalently, every permutation contains $P$ if and only if it contains $Q$.
In informal terms, the two pattern-avoidance tests are indistinguishable even
though the patterns may be drawn differently.  This is stronger than
\emph{Wilf equivalence}, which asks only for
$|\Av_n(P)|=|\Av_n(Q)|$ in every length.

We shall also use the obstruction to coincidence with classical patterns
introduced by Tenner \cite{Tenner2013}.  Write
$G(\tau)=\{(i,\tau_i):1\le i\le k\}$ for the graph of
$\tau\in\mathfrak S_k$.  A shaded square $(a,b)\in R$ is a
\emph{pointless square}, or an enclosed diagonal of length one, if its closed
square has no vertex in $G(\tau)$.  For $c\ge1$, a set of shaded squares
\[
 D=\{(a+i,b+i):0\le i\le c\}\subseteq R
\]
is an \emph{enclosed NE-diagonal} if
\[
 G(\tau)\cap\{(a+i,b+i):0\le i\le c+1\}
 =\{(a+i,b+i):1\le i\le c\}.
\]
Similarly,
$D=\{(a+i,b-i):0\le i\le c\}\subseteq R$ is an
\emph{enclosed SE-diagonal} if
\[
 G(\tau)\cap\{(a+i,b+1-i):0\le i\le c+1\}
 =\{(a+i,b+1-i):1\le i\le c\}.
\]
Either type has length $c+1$ and is called \emph{proper} when its length is
greater than one.  This is the square-based formulation in
\cite[Definition~3.1]{ClaessonTennerUlfarsson2015}. For a mesh pattern $P=(\tau,R)$, let $\operatorname{enc}(P)$ denote its set
of enclosed diagonals, regarded as sets of shaded squares.

\begin{proposition}[\cite{ClaessonTennerUlfarsson2015}, Lemmas~4.1 and 4.2]
\label{prop:coincidence-invariants}
If the mesh patterns $P=(\tau,R)$ and $Q=(\sigma,S)$ are coincident, then $\tau=\sigma$ and $\operatorname{enc}(P)=\operatorname{enc}(Q)$.
\end{proposition}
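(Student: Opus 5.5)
This is Lemmas 4.1 and 4.2 of Claesson--Tenner--Ulfarsson. I would reprove both claims by exhibiting a witness permutation that separates the patterns whenever the invariant in question differs.

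\textbf{Equality of underlying permutations.} Suppose $P\Coin Q$. The permutation $\tau$ itself contains $P$, using all of its entries: with every entry selected, every region $(i_a,i_{a+1})\times(v_b,v_{b+1})$ is empty, so no shading is violated. Hence $\tau$ contains $Q$. A permutation of length $k$ can only contain a pattern of length at most $k$, so $|\sigma|\le|\tau|$, and symmetrically $|\tau|\le|\sigma|$. With equal lengths, an occurrence of $Q$ in $\tau$ must use all entries, so $\st(\tau)=\sigma$, that is, $\tau=\sigma$.

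\textbf{Equality of enclosed diagonals.} Now $P=(\tau,R)$ and $Q=(\tau,S)$. Assume some enclosed diagonal $D$ of $P$ is not an enclosed diagonal of $Q$, so some square $(a,b)\in D$ is unshaded in $Q$. I would insert a new point of the plot of $\tau$ into the interior of that square, producing a permutation $\pi$ of length $k+1$.
\begin{itemize}
\item $\pi$ contains $Q$: the $k$ original points form a classical occurrence of $\tau$, and the only extra point lies in the unshaded cell $(a,b)$, so no shaded region of $Q$ is violated.
\item $\pi$ avoids $P$: this is the main step. Any occurrence of $\tau$ in $\pi$ omits exactly one point $x$ of $\pi$.
 \begin{itemize}
 \item If $x$ is the new point, then $x$ lies in the shaded cell $(a,b)$ of $P$, so the occurrence is not an occurrence of $P$.
 \item If $x$ is the new point is \emph{not} omitted, the new point plays the role of some entry of $\tau$ and an original point is omitted. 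Because $\tau$ has the enclosed (say NE) diagonal $D$, the points on the diagonal are exactly $(a+i,b+i)$ for $1\le i\le c$, while the diagonal's end corners $(a,b)$ and $(a+c+1,b+c+1)$ are not points. The order-isomorphic copies of $\tau$ in $\pi$ are then obtained by "sliding" along the diagonal: the new point replaces one diagonal point, and the omitted point is another diagonal point (or the new point itself). In every such case the omitted point lies inside a cell of $D$ relative to the chosen occurrence, and all cells of $D$ are shaded in $P$, so again the occurrence fails.
 \end{itemize}
\end{itemize}
Thus $\pi$ separates $P$ from $Q$, contradicting coincidence. By symmetry $\operatorname{enc}(P)=\operatorname{enc}(Q)$. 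The SE case follows by reversal, and pointless squares are the case $c=0$, where the inserted point is isolated.

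The main obstacle is the case analysis in the second claim. I must show that whenever an original point $x$ is omitted, the copy of $\tau$ formed with the new point forces $x$ into a shaded cell of $D$. This requires arguing that a point of $\pi$ can substitute for an original point of $\tau$ only if it is adjacent in both position and value, with the same orientation, which is precisely the diagonal structure. I would argue this by induction along the diagonal, using that the corner vertices beyond the ends are not points of $G(\tau)$, which stops the sliding from escaping $D$.
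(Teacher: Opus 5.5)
Your proof is correct and follows essentially the standard argument behind the cited Lemmas~4.1 and~4.2 of Claesson, Tenner and Ulfarsson; the paper itself gives no proof and simply defers to that source. In that argument $\tau$ is the witness forcing $\sigma=\tau$, and inserting a point into a square of $D$ left unshaded in $Q$ produces a permutation that contains $Q$ but avoids $P$. The step you flag is exactly the fact that two distinct deletions $\pi-x$ and $\pi-y$ standardize to the same permutation only when $x$ and $y$ are the ends of a monotone run of consecutive positions and consecutive values. Your induction along the diagonal proves this, and the non-point end corners (and, for a pointless square, the absence of any corner point) confine the run, so every omitted point lies in a square of $D$.
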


Equality of enclosed diagonals is necessary but is not sufficient for general
mesh-pattern coincidence \cite{ClaessonTennerUlfarsson2015}.  We shall use the
necessary direction explicitly in the proof of
\cref{thm:vincular-classification}.

A mesh $R$ is called \emph{superfluous} for $\tau$ if its shading imposes no
additional avoidance condition, that is,
$(\tau,R)\Coin(\tau,\varnothing)$.  Thus ``superfluous'' describes the whole
mesh; it does not mean that its cells can necessarily be removed independently.

\begin{theorem}[\cite{Tenner2013}]
\label{thm:superfluous-mesh}
The mesh $R$ is superfluous for $\tau$ if and only if
$\operatorname{enc}((\tau,R))=\varnothing$.
\end{theorem}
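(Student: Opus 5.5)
The plan is to prove the two directions separately. The easy one is necessity. If $R$ is superfluous, then $(\tau,R)\Coin(\tau,\varnothing)$. By \cref{prop:coincidence-invariants}, $\operatorname{enc}((\tau,R))=\operatorname{enc}((\tau,\varnothing))$. The empty mesh has no shaded squares, so it has no enclosed diagonals, and hence $\operatorname{enc}((\tau,R))=\varnothing$. (Alternatively, I would prove this directly. Given an enclosed diagonal, I would build a permutation containing $\tau$ in which every occurrence violates that shaded region. To do this, inflate the points on the diagonal, or insert a point into a pointless square, so that any occurrence of $\tau$ is forced to leave a point in one of the diagonal's cells. The direct route also explains why proper diagonals must be enclosed at both ends.)

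For sufficiency, assume $\operatorname{enc}((\tau,R))=\varnothing$. Any occurrence of $(\tau,R)$ is an occurrence of $\tau$, so it suffices to show the reverse: if $\pi$ contains $\tau$, then it contains $(\tau,R)$. I would use a shading-lemma style minimality argument. Among all classical occurrences of $\tau$ in $\pi$, choose one minimizing a potential function. A natural choice is the total number of points of $\pi$ lying in regions indexed by $R$, refined lexicographically if needed. Suppose some shaded cell $(a,b)$ contains a point $x$ of $\pi$. Since the cell is not pointless, one of its four corners is a point $p$ of $G(\tau)$. I would replace the entry playing $p$ by $x$. This still gives an occurrence of $\tau$, because $x$ lies in the cell adjacent to $p$ and so has the same relative order with the other selected entries. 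Choosing $x$ extremal in the cell, for example closest to $p$, keeps the other cells around $p$ from gaining points.

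The main obstacle is that the swap can move a point of $\pi$ into a different shaded cell touching $p$. Specifically, the old position of $p$'s entry now lies in the cell diagonally opposite across $p$, or in a neighbouring cell. This is exactly where enclosed diagonals enter. Suppose that at every corner choice the replacement only pushes the problem along a diagonal of shaded cells, through points of $G(\tau)$ lying one after another on that diagonal. The diagonal can only end in one of two ways. Either it reaches an unshaded cell, where the displaced point becomes harmless. Or it reaches a shaded cell whose far corner is not a pattern point, and then the chain is an enclosed NE- or SE-diagonal, contradicting the hypothesis. I would make this rigorous by following the chain. At each step, use the corner of the offending cell for which the chain propagates, choosing NE or SE according to which corners are points of $G(\tau)$. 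Then show that the potential strictly decreases after finitely many swaps, for instance by measuring distance in $\pi$ from the selected entries together with the number of bad points. Handling corners where two different diagonals meet, and verifying that the potential truly decreases, is the delicate bookkeeping. I expect the bulk of the work to be there.
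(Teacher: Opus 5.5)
The paper quotes this theorem from \cite{Tenner2013} without proof, so your argument has to stand on its own. Your necessity direction does: applying \cref{prop:coincidence-invariants} to $(\tau,R)\Coin(\tau,\varnothing)$ gives $\operatorname{enc}((\tau,R))=\operatorname{enc}((\tau,\varnothing))=\varnothing$.

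Sufficiency, which is the substance of the theorem, has a genuine gap. You never produce a quantity that provably decreases, and the mechanism you sketch fails as stated. Your local fact is correct: if $x$ lies in a cell having a point $p$ of $G(\tau)$ as a corner, replacing the entry playing $p$ by $x$ gives a classical occurrence. But this moves two grid lines. Every point of $\pi$ in the vertical strip between $x$ and $p$ changes column, and every point in the horizontal strip changes row, anywhere in the diagram and however $x$ is chosen. So the number of points in shaded cells can go up even when the displaced entry lands in an unshaded cell. For example, take $\tau=123$, $R=\{(1,1),(2,3)\}$ (no enclosed diagonal), and $\pi=125634$. The occurrence $1,3,4$ has a single bad point, the entry $2$ in cell $(1,1)$. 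Replace the entry $3$, which plays the NE corner of that cell, by $2$. The displaced $3$ lands in the unshaded cell $(2,2)$, yet in the new occurrence $1,2,4$ the entries $5$ and $6$ lie in the shaded cell $(2,3)$, so the count rises from $1$ to $2$. Your chain step is also wrong as stated. Having no enclosed diagonal only guarantees that each diagonal run of shaded cells has \emph{one} open end. For $\tau=12$ and $R=\{(0,0),(1,1)\}$, following the diagonal from cell $(1,1)$ to the SW reaches the shaded cell $(0,0)$, whose far corner is not a point. Yet $R$ has no enclosed diagonal, because the NE end of that run is open.

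What is missing is a global orientation combined with a positional potential instead of a count.
\begin{enumerate}
\item Each shaded cell has one corner point or two opposite ones, so it lies on a single diagonal through them. Group shaded cells into maximal runs along these diagonals, with consecutive cells sharing a point of $G(\tau)$. Call an end of a run open if its outer vertex is a point whose next cell on the diagonal is unshaded.
\item $\operatorname{enc}((\tau,R))=\varnothing$ says exactly that there are no pointless cells and every run has an open end. Orient each run toward one open end, and let $p(r)$ be the corner of $r\in R$ in that direction.
\item On each of its two diagonals, a point $p$ equals $p(r)$ for at most one $r$. So its available moves are at most one of SW, NE and at most one of NW, SE. Any two such directions change some coordinate in the same sense: SW and NW both decrease position, SW and SE both decrease value, and so on.
\item Let $\phi_p$ be that signed coordinate of the entry playing $p$, and take a classical occurrence minimizing $\sum_p\phi_p$. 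If some $r\in R$ contained a point $x$, replacing the entry playing $p(r)$ by $x$ would give an occurrence with strictly smaller sum.
\end{enumerate}
Hence all shaded cells are empty. Where the other points of $\pi$ land never needs to be tracked, and that is exactly the bookkeeping your count cannot control.
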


\begin{figure}[ht]
\centering
\begin{tabular}{c@{\qquad}c@{\qquad}c}
$\pattern{scale=0.92}{3}{1/1,2/3,3/2}{1/1,1/2,2/1,2/2}$
&$\patternhighlight{scale=0.92}{4}{1/2,2/4,3/1,4/3}{1/1,1/2,1/3,2/1,2/2,2/3,3/1,3/2,3/3}{2/2}$
&$\patternhighlight{scale=0.92}{3}{1/1,2/2,3/3}{1/1,1/2,2/1,2/2}{1/2,2/1}$\\
$\Boxp(132)$&$\Boxp(2413)$&$\Boxp(123)$
\end{tabular}
\caption{Enclosed diagonals in boxed meshes.  Enclosed diagonals are drawn
hatched and outlined as well as coloured, so that they remain distinguishable
in grayscale.  The left-hand mesh has none; the marked central square in the
middle is a pointless enclosed diagonal; the two marked squares on the right
form a proper SE-diagonal of length two.}
\label{fig:enclosed-diagonals}
\end{figure}

Thus boxed patterns are \emph{not} generally free of enclosed diagonals.
Indeed, Tenner's classification \cite[Corollary~3.8]{Tenner2013} says that the
only boxed patterns without one are those with underlying permutations
$1,12,21,132,213,231,$ and $312$.  In particular, every boxed pattern of
length at least four has an enclosed diagonal.

\section{Boxed patterns as interval-consecutive patterns}\label{sec:interval}

For an integer interval $I=[a,b]$, write $\pi|_I$ for the subword of $\pi$
obtained by retaining entries whose values lie in $I$.

\begin{definition}
Let $\tau\in\mathfrak S_k$.  An occurrence
$\pi_{i_1}\cdots\pi_{i_k}$ of $\tau$ in $\pi$ is a \emph{boxed occurrence} if
the open axis-parallel rectangle determined by its leftmost, rightmost, lowest
and highest selected points contains no further point of the plot of $\pi$.
The corresponding mesh pattern is denoted $\B(\tau)$.
\end{definition}

Because a permutation plot has no repeated abscissae or ordinates, the
condition can equivalently be stated with the closed rectangle: the closed
bounding rectangle contains no plot point other than the selected ones.

\begin{theorem}\label{thm:interval}
For every $\tau\in\mathfrak S_k$ and every permutation $\pi$, the following are
equivalent.
\begin{enumerate}[label=\textup{(\roman*)}]
\item $\pi$ contains $\B(\tau)$;
\item for some interval $I$ of values, the word $\pi|_I$ has a consecutive
factor whose standardization is $\tau$.
\end{enumerate}
\end{theorem}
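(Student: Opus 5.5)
Both directions are direct translations between the geometric empty-rectangle condition and the combinatorics of the restricted word $\pi|_I$. The key choice is to take $I$ to be exactly the value range of the candidate occurrence. The main tool is the remark after the definition: the closed bounding rectangle of a boxed occurrence contains no plot point other than the selected ones.

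For (i)$\Rightarrow$(ii), I will fix a boxed occurrence at positions $i_1<\cdots<i_k$. Its leftmost and rightmost points are at $i_1$ and $i_k$. Its lowest and highest values are $v_1=\min$ and $v_k=\max$ of the selected values. I take $I=[v_1,v_k]$.

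The entries of $\pi|_I$ are those $\pi_j$ with $v_1\le\pi_j\le v_k$, listed in positional order. Any such $j$ with $i_1<j<i_k$ gives a point in the closed rectangle, so by emptiness $j$ is one of the selected positions. Hence, between the entries $\pi_{i_1}$ and $\pi_{i_k}$ of $\pi|_I$, the only entries are the selected ones. The selected entries therefore form a consecutive factor of $\pi|_I$ whose standardization is $\tau$.

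For (ii)$\Rightarrow$(i), I will take an interval $I$ and a consecutive factor $\pi_{i_1}\cdots\pi_{i_k}$ of $\pi|_I$ with standardization $\tau$. These entries form a classical occurrence of $\tau$ in $\pi$. Their minimum and maximum values $m\le M$ lie in $I$, so $[m,M]\subseteq I$ because $I$ is an interval.

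Now suppose a point $(j,\pi_j)$ lies in the open bounding rectangle. Then $i_1<j<i_k$ and $m<\pi_j<M$, so $\pi_j\in I$ and $\pi_j$ is an entry of $\pi|_I$ lying strictly between the first and last letters of the factor. By consecutiveness it is one of the selected entries. This contradicts the open rectangle excluding the selected points themselves, since those have $\pi_j\in\{m,M\}$ or $j\in\{i_1,i_k\}$. The one exception is an interior selected point, which is allowed in the mesh definition because it is selected.

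The only point needing care is matching the mesh-pattern definition exactly: shaded cells $\{1,\dots,k-1\}^2$ together form precisely the open rectangle $(i_1,i_k)\times(v_1,v_k)$ minus the grid lines through the selected points. Since the plot has no repeated coordinates, a point on those lines would share a coordinate with a selected point and hence be selected. I expect this bookkeeping, rather than either implication, to be the main (mild) obstacle.
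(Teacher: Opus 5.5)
Your proof is correct and is essentially the paper's argument: in both directions you take the interval to be the value range of the selected entries, and you observe that, within that range of values, emptiness of the bounding rectangle says exactly that no unselected entry lies between the first and last selected positions. Your extra bookkeeping matching the shaded cells $\{1,\dots,k-1\}^2$ to the open rectangle is a correct detail the paper leaves implicit; in (ii)$\Rightarrow$(i), however, it would read more cleanly to assume an \emph{unselected} point lies in the rectangle and contradict consecutiveness directly, as the paper does, rather than derive a contradiction and then carve out interior selected points as an exception.
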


\begin{proof}
Suppose first that points with positions $i_1<\cdots<i_k$ form a boxed
occurrence, and let $u$ and $v$ be the minimum and maximum of their values.
No other point with value in $[u,v]$ can have position strictly between
$i_1$ and $i_k$.  Thus the selected entries form a consecutive factor of
$\pi|_{[u,v]}$.

Conversely, suppose a consecutive factor of $\pi|_I$ standardizes to $\tau$.
Let $u$ and $v$ be the minimum and maximum values in this factor.  Any extra
point in its bounding rectangle would have value in $[u,v]\subseteq I$ and
position between the first and last entries of the factor.  It would therefore
appear inside the factor of $\pi|_I$, a contradiction.
\end{proof}

The criterion places boxed avoidance beside simsun-type restrictions, but with
all value intervals rather than only initial intervals.  It also gives a uniform
way to search for occurrences: restrict to intervals, then use consecutive
pattern matching.  A second equivalent formulation, used in
\cite{AKV2013}, is obtained by repeatedly deleting the leftmost, rightmost,
smallest or largest entry: $\pi$ contains $\B(\tau)$ if and only if some such
sequence of external deletions ends at $\tau$.

\begin{remark}
\label{rem:matching}
Deciding whether a given permutation contains a given boxed pattern has also
been studied algorithmically.  Bruner and Lackner \cite{BrunerLackner2013}
devote a section to boxed mesh patterns and give an $O(n^3)$ algorithm; Cho,
Na and Sim \cite{ChoNaSim2015} improve this to $O(n^2m)$ and then to
$O(n^2\log m)$, where $m$ is the length of the pattern; and Amit, Bille, Hagge Cording, Li G\o rtz and Wedel
Vildh\o j \cite{AmitEtAl2016} give an $O(n^2)$ algorithm, which is optimal
because the output can have size $\Omega(n^2)$.  
\end{remark}

The interval formulation also gives a useful inheritance principle.  Say that
$\sigma\in\mathfrak S_r$ is an \emph{interval factor} of
$\tau=\tau_1\cdots\tau_k$ if, for some $j$ and $a$,
\[
 \{\tau_j,\tau_{j+1},\ldots,\tau_{j+r-1}\}
   =\{a,a+1,\ldots,a+r-1\}
\]
and $\sigma=\st(\tau_j\cdots\tau_{j+r-1})$.  Thus the entries forming
$\sigma$ occupy consecutive positions and consecutive ranks in $\tau$.

\begin{proposition}\label{prop:interval-factor}
If $\sigma$ is an interval factor of $\tau$, then every occurrence of
$\B(\tau)$ contains an occurrence of $\B(\sigma)$.  Consequently, $\Av_n(\B(\sigma))\subseteq \Av_n(\B(\tau))$ for $n\ge0$. In particular, failure of the Stanley--Wilf property for $\B(\sigma)$ implies
failure of the property for $\B(\tau)$.
\end{proposition}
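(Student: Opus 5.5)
The plan is to pass from an occurrence of $\B(\tau)$ to the sub-occurrence formed by the entries playing the interval factor, and to check directly that the smaller bounding rectangle is still empty. Let $\pi_{i_1}\cdots\pi_{i_k}$ be a boxed occurrence of $\tau$ in $\pi$. Fix $j$ and $a$ so that $\tau_j,\ldots,\tau_{j+r-1}$ are exactly the values $a,\ldots,a+r-1$. Consider the subsequence $\pi_{i_j}\pi_{i_{j+1}}\cdots\pi_{i_{j+r-1}}$. Because it plays $\tau_j\cdots\tau_{j+r-1}$ inside an occurrence of $\tau$, it is order-isomorphic to that factor, and so its standardization is $\sigma$. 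It remains to show that its bounding rectangle contains no further plot point.

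First, this bounding rectangle lies inside the bounding rectangle of the whole occurrence. Its horizontal extent is $[i_j,i_{j+r-1}]\subseteq[i_1,i_k]$. Its vertical extent is the interval between the values playing $a$ and $a+r-1$, which is contained in the interval between the values playing $1$ and $k$. By the boxed condition, the only plot points in the large closed rectangle are the $k$ selected ones, so every plot point in the small rectangle is a selected point $\pi_{i_s}$.

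Second, we show that such a selected point must be one of the chosen $r$ entries. Its position lies in $[i_j,i_{j+r-1}]$, so $j\le s\le j+r-1$ by monotonicity of the $i$'s. The value condition gives the same conclusion independently: the value must be ranked in $[a,a+r-1]$, since the occurrence preserves relative order. Only the positional condition is actually needed. Hence the sub-occurrence is a boxed occurrence of $\sigma$. Alternatively, the same argument reads off Theorem~\ref{thm:interval}: restricting $\pi$ to the smaller value interval keeps the chosen entries consecutive.

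The consequences follow formally. If $\pi$ avoids $\B(\sigma)$, then it cannot contain $\B(\tau)$, since any occurrence of $\B(\tau)$ would yield one of $\B(\sigma)$. This gives $\Av_n(\B(\sigma))\subseteq\Av_n(\B(\tau))$. So $|\Av_n(\B(\tau))|\ge|\Av_n(\B(\sigma))|$, and an infinite $\limsup$ of $n$th roots for $\sigma$ transfers to $\tau$. There is no real obstacle here. The only point needing care is the containment of rectangles, which uses that the factor occupies consecutive positions and consecutive ranks of $\tau$, so the extreme entries of the sub-occurrence are governed by $\tau_j,\tau_{j+r-1}$ and by the values $a,a+r-1$.
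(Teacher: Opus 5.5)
Your proof is correct and is essentially the paper's argument. You take the selected points playing the factor, observe that any unselected point in their bounding rectangle would lie in the empty rectangle of the full occurrence, use consecutiveness to exclude the other selected points, and then pass to contrapositives and cardinalities. Your remark that the positional condition alone suffices is right: the paper cites both consecutive positions and consecutive ranks, but either one alone keeps the other selected points out. The rectangle containment you single out in your last sentence needs neither condition, since the bounding box of any subset of the selected points lies inside that of the whole occurrence.
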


\begin{proof}
Take the selected points of a boxed $\tau$ occurrence that correspond to the
factor $\tau_j\cdots\tau_{j+r-1}$.  No other selected point lies horizontally
between the first and last of these points, because the positions of the
factor are consecutive.  No other selected point lies vertically between
their lowest and highest points, because their ranks are consecutive.  An
unselected point in the smaller bounding rectangle would also lie in the
bounding rectangle of the original boxed occurrence.  Hence the selected
factor is a boxed $\sigma$ occurrence.  Taking contrapositives gives the class
inclusion, and the final assertion follows by comparing cardinalities.
\end{proof}

For example, Proposition \ref{prop:interval-factor} and the doubled-letter construction
show immediately that $\B(\tau)$ has factorially many avoiders whenever
$\tau$ has an interval factor $123$ or $321$.  The ``good pattern'' argument
of \cite{AKV2013} handles many patterns without such a monotone interval
factor and is therefore genuinely stronger than this inheritance principle.

\section{The exceptional length-four patterns}\label{sec:length4}

Reverse, complement and inverse preserve boxed containment and avoidance.  A
direct calculation gives the two orbits in \eqref{eq:orbits}; in particular,
inverse fixes $2143$ and interchanges $2413$ with $3142$.  The distinction is
enumerative as well as formal:
\begin{center}
\begin{tabular}{c@{\qquad}rrrrrr}
\toprule
$n$ & 0&1&2&3&4&5\\
\midrule
$|\Av_n(\B(2143))|$ &1&1&2&6&23&106\\
$|\Av_n(\B(2413))|$ &1&1&2&6&23&104\\
\bottomrule
\end{tabular}
\end{center}

Recall that an occurrence of the vincular pattern $2\underline{41}3$ is a
subsequence $a,b,c,d$ in that positional order, with $b$ and $c$ adjacent and $c<a<d<b$.

\begin{lemma}\label{lem:tightening}
A permutation contains the boxed pattern $\B(2413)$ if and only if it contains
the vincular pattern $2\underline{41}3$.
\end{lemma}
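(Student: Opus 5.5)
The forward direction is immediate. In a boxed occurrence of $2413$ at positions $i_1<i_2<i_3<i_4$, any point strictly between positions $i_2$ and $i_3$ lies horizontally inside the bounding rectangle $[i_1,i_4]\times[v_1,v_4]$. Its value cannot lie in $[v_1,v_4]$. But this rectangle spans all selected values, so no point at all can sit between positions $i_2$ and $i_3$. Hence $i_3=i_2+1$, and the occurrence is already an occurrence of $2\underline{41}3$.

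For the converse I would use a minimal-counterexample or tightening argument. Start with an occurrence $a,b,c,d$ of $2\underline{41}3$, with $b,c$ adjacent and $c<a<d<b$. Among all such occurrences, choose one minimizing the area, or the number of plot points, of the bounding rectangle $[\mathrm{pos}(a),\mathrm{pos}(d)]\times[c,b]$. The goal is to show that this rectangle is empty apart from $a,b,c,d$.

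Suppose some extra point $x$ lies inside the rectangle. Since $b,c$ are adjacent, $x$ lies either strictly between $a$ and $b$ in position, or strictly between $c$ and $d$. By reverse-complement symmetry, which maps $2\underline{41}3$ to itself (indeed $2413^{rc}=2413$, with the bond preserved), it suffices to treat $x$ between $a$ and $b$, with $c<x<b$. There are three cases according to the value of $x$.
\begin{itemize}
\item If $c<x<a$, then $x,b,c,d$ is an occurrence with the smaller value range $[c,b]$ but a smaller horizontal span.
\item If $a<x<d$, then $x,b,c,d$ is again an occurrence ($c<x<d<b$), positioned further right.
\item If $d<x<b$, replacing $a$ does not work. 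Instead consider the entries between $x$ and $c$: look at the consecutive pair immediately after the last position, between $x$ and $b$, of an entry with value greater than $d$ followed by an entry with value less than $a$. It is cleaner to first handle this case by choosing the new bond. Walk left to right from $x$ to $c$ and find the first adjacent pair $(y,z)$ with $y>d$ and $z<a$. Such a pair exists because the walk starts at a value above $d$ and ends at $c<a$; a rigorous version takes the last entry with value $>d$ before the first subsequent entry with value $<a$. Then $a,y,z,d$ is an occurrence provided the entry $z$ satisfies $z<a$ and $y>d$, and its rectangle is strictly smaller.
\end{itemize}

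The main obstacle is this last case. The walk may pass through values in $[a,d]$, so there may be no adjacent drop from above $d$ to below $a$. I would fix this by instead choosing the occurrence minimizing the horizontal span $\mathrm{pos}(d)-\mathrm{pos}(a)$ first, and then the vertical span. Points with value in $(a,d)$ encountered along the walk can then replace $a$ (if before the drop) or $d$ (if after), which shrinks the span. I expect a careful but short case analysis to show that every extra point yields either a strictly shorter occurrence, or one of the same span with smaller height. This contradicts minimality, so the minimal occurrence is a boxed $2413$.
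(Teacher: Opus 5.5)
There is a genuine gap, and the same misconception drives both directions: you assume that a boxed occurrence of $2413$ can be taken with its ``4'' and ``1'' adjacent. In the forward direction, the inference ``no point at all can sit between positions $i_2$ and $i_3$'' is invalid. A point in that positional window lies outside the bounding rectangle as soon as its value exceeds $b$ or is less than $c$. For example, in $\pi=24513$ the subsequence $2,4,1,3$ is a boxed $2413$ occurrence, since the intervening $5$ lies above the rectangle, yet $4$ and $1$ are not adjacent. The implication itself is still true, but it needs the paper's transition argument. Every entry strictly between $b$ and $c$ lies above $b$ or below $c$. Reading from $b$ to $c$, there is therefore an adjacent pair $b',c'$ passing from the upper to the lower region. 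Since $b'\ge b$ and $c'\le c$, the entries $a,b',c',d$ form an occurrence of $2\underline{41}3$.

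The same permutation shows that your converse cannot work as set up. In $24513$ the only occurrence of $2\underline{41}3$ is $2,5,1,3$, and its rectangle contains $4$. This is exactly your third case, $d<x<b$. Your walk from $x$ to $c$ just returns the bond $(b,c)$. No minimization order helps (area, or span first and height second), because no occurrence of $2\underline{41}3$ in this permutation has an empty rectangle. The missing idea is that the tightened occurrence must be allowed to lose the adjacency.

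One self-contained repair is as follows. Among classical $2413$ occurrences $a,b,c,d$ such that no entry between $b$ and $c$ has value in $(c,b)$, minimize the number of plot points in the rectangle; both vincular and boxed occurrences satisfy this condition. If an extra point exists to the left of $b$, let $x$ be the rightmost such point; otherwise use reverse--complement. Replace $a$ by $x$ when $x<d$, and $b$ by $x$ when $x>d$. The condition survives (for the second replacement this uses that $x$ is rightmost), and the rectangle loses a point, contradicting minimality.

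The paper instead uses the external-deletion characterization of \cite{AKV2013}. It repeatedly deletes a leftmost, rightmost, largest or smallest entry while keeping some occurrence of $2\underline{41}3$. When the entry immediately preceding $b$ exceeds $d$, it deletes $b$, so that this entry becomes adjacent to $c$ only in the smaller permutation. This is why adjacency is never needed in the original permutation.
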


\begin{proof}
Suppose $a,b,c,d$ is a boxed $2413$ occurrence, so
$c<a<d<b$.  Read the entries between $b$ and $c$.  Since the
rectangle is empty, each is either above $b$ or below $c$.  Beginning with $b$
and ending with $c$, there is an adjacent transition $b',c'$ from the upper to
the lower region.  Since $b'\ge b$ and $c'\le c$, $c<a<d<b$ gives
$c'<a<d<b'$; and $b'$ and $c'$ lie weakly between $b$ and $c$ in position, so
$a$ precedes $b'$ and $d$ follows $c'$.  Then $a,b',c',d$ is an occurrence of
$2\underline{41}3$.

For the converse, use the external-deletion characterization in \cite{AKV2013} discussed in Section~\ref{sec:interval}. It is enough to show that whenever a
permutation of length greater than four contains $2\underline{41}3$, one of
its four external entries can be deleted while preserving such an occurrence.
Choose an occurrence $a,b,c,d$.  If an external entry is not among these four,
delete it; the bond survives, since $b$ and $c$ are adjacent and therefore no
entry lies between them.  The only remaining case is therefore that $a,b,c,d$
are respectively the leftmost, largest, smallest and rightmost entries of the
whole permutation; the assignment of roles is forced, because by
$c<a<d<b$ the largest of the four is $b$ and the smallest is $c$,
while in positional order the first is $a$ and the last is $d$.
Let $x$ be the entry immediately preceding $b$.

If \(x>d\), deleting the largest entry \(b\) makes \(x\) adjacent to \(c\), and \(a,x,c,d\) is still an occurrence. Suppose instead that \(x<d\). If \(x\neq a\), then \(c<x<d<b\), so \(x,b,c,d\) is an occurrence and the leftmost entry \(a\) may be deleted. The only remaining possibility is \(x=a\). Since \(a\) is the first entry and \(b\) and \(c\) are adjacent, the permutation begins with \(a,b,c\). Moreover, \(d\) is the last entry, so, because the permutation has length greater than four, the entry \(y\) immediately following \(c\) exists and satisfies \(y\neq d\). Since \(c\) and \(b\) are respectively the smallest and largest entries of the permutation, \(c<y<b\). If \(y>a\), then \(c<a<y<b\), so \(a,b,c,y\) is an occurrence and \(d\) may be deleted. If \(y<a\), then \(y<a<d<b\); deleting the smallest entry \(c\) makes \(b\) and \(y\) adjacent, and \(a,b,y,d\) is an occurrence. Thus an external entry can always be deleted while preserving an occurrence. Repeating the argument ends at the permutation \(2413\), proving boxed containment.
\end{proof}

\begin{remark}
Contrary to what one might first expect, boxed meshes are not generally free of
enclosed diagonals; see Figure~\ref{fig:enclosed-diagonals}.  The two meshes in
Lemma \ref{lem:tightening} have the same nonempty set of enclosed diagonals: the
pointless square $(2,2)$.  Thus the enclosed-diagonal invariant of
\cite{ClaessonTennerUlfarsson2015} is consistent with the coincidence, but
Tenner's superfluous-mesh theorem \cite{Tenner2013} does not prove it: that
theorem applies precisely when there are no enclosed diagonals.  Nor are the
two meshes nested, so the one-square Shading Lemma
\cite[Lemma~3.11]{HilmarssonEtAl2015} cannot be applied directly.  Its
simultaneous extension \cite[Section~7]{ClaessonTennerUlfarsson2015} is also a
sufficient local shading criterion, not a converse that would establish this
coincidence.  Finally, the coincidence theorem for two vincular patterns does
not apply because the boxed mesh is not vincular.  The tightening argument
above is therefore not an immediate consequence of these earlier results; it
supplies the required global movement of the adjacency bond.
\end{remark}

\begin{theorem}\label{thm:semibaxter}
For every $n\geq0$, $|\Av_n(\B(2413))|=|\Av_n(\B(3142))|=\SB_n$, where $\SB_n$ is the $n$th semi-Baxter number (OEIS A117106).  With
$\SB_0=\SB_1=1$, these numbers satisfy
\begin{equation}\label{eq:sbrec}
 \SB_n=\frac{11n^2+11n-6}{(n+4)(n+3)}\SB_{n-1}
 +\frac{(n-3)(n-2)}{(n+4)(n+3)}\SB_{n-2}\qquad(n\ge2),
\end{equation}
and, for $n\ge2$,
\begin{equation}\label{eq:sbformula}
 \SB_n=\frac{24}{(n-1)n^2(n+1)(n+2)}
 \sum_{j=0}^{n}\binom{n}{j+2}\binom{n+2}{j}
 \binom{n+j+2}{j+1}.
\end{equation}
\end{theorem}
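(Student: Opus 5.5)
The plan is to reduce the statement to known enumeration in three steps, with only the middle step needing care.

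First, by Lemma~\ref{lem:tightening}, $\Av_n(\B(2413))=\Av_n(2\underline{41}3)$ for every $n$, so it suffices to count avoiders of the vincular pattern $2\underline{41}3$. For the second equality, inverse maps $\B(2413)$ to $\B(3142)$ (as recorded in Section~\ref{sec:length4}) and preserves avoidance, so $|\Av_n(\B(3142))|=|\Av_n(\B(2413))|$. Alternatively, reverse also works, since $2413^{\mathrm r}=3142$ and boxed patterns are closed under reverse.

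Second, we identify $\Av(2\underline{41}3)$ with the semi-Baxter permutations of Bouvel, Guerrini, Rechnitzer and Rinaldi~\cite{BouvelEtAl2018}. This is the step that needs care. Those authors define semi-Baxter permutations as the avoiders of $2\text{-}41\text{-}3$, which is exactly our $2\underline{41}3$. So the identification reduces to checking that their dash convention matches ours:
\begin{itemize}
\item the two unbonded outer letters play $2$ and $3$;
\item the adjacent middle pair plays $4$ then $1$.
\end{itemize}
If their convention were presented under a symmetry instead, for instance as $3\text{-}14\text{-}2$, I would apply complement, reverse or inverse to match it. This is harmless, because such symmetries preserve the counting sequence, and vincular patterns transform under symmetries with bonds carried along. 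For inverse the bond becomes a value bond, so one would stick to reverse and complement. Here reverse composed with complement sends $2\underline{41}3$ to $2\underline{41}3$ itself, and reverse sends it to $3\underline{14}2$. Both are available, so any of their equivalent descriptions can be matched.

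Third, we import the enumeration. In~\cite{BouvelEtAl2018} the semi-Baxter numbers are shown, via a generating tree with two labels and the kernel method, to satisfy both the P-recursive recurrence~\eqref{eq:sbrec} and the closed sum~\eqref{eq:sbformula}, and they are recorded as OEIS A117106. I would cite these results directly. As a sanity check, I would verify the small values:
\begin{itemize}
\item $\SB_2=2$, $\SB_3=6$ and $\SB_4=23$ from~\eqref{eq:sbrec};
\item for instance $n=2$ gives $\frac{60}{30}\cdot1+0=2$;
\item $\SB_5=104$, which agrees with the table above.
\end{itemize}
Optionally, one could confirm that~\eqref{eq:sbformula} gives the same values at $n=2,3$. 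For a self-contained verification that the sum satisfies~\eqref{eq:sbrec}, Zeilberger's algorithm would certify it, but I would rely on the published proof.

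The main obstacle is not mathematical depth but bookkeeping. The difficulty lies in the exact convention matching in the second step and in correctly stating the normalization of~\eqref{eq:sbformula}, namely the range $n\ge2$ and the prefactor $24/((n-1)n^2(n+1)(n+2))$, as in the source.
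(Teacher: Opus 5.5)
Your proposal is correct and follows the paper's proof. Lemma~\ref{lem:tightening} identifies $\Av_n(\B(2413))$ with the semi-Baxter permutations (avoiders of $2\text{-}41\text{-}3$), a symmetry transfers the count to $\B(3142)$ (the paper uses inverse; your reverse works equally well), and \eqref{eq:sbrec} and \eqref{eq:sbformula} are cited from \cite{BouvelEtAl2018}. Your small-value checks against the table are a harmless extra.
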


\begin{proof}
By Lemma~\ref{lem:tightening}, the avoiders of \(\Boxp(2413)\) are the
semi-Baxter permutations, and hence have cardinality \(\SB_n\).
Inverse symmetry gives the same enumeration for \(\Boxp(3142)\). The recurrence and sum are the semi-Baxter enumeration of
Bouvel et al.\ \cite[Proposition~13 and Theorem~14]{BouvelEtAl2018}.
\end{proof}

In particular, $\SB_n\sim A\mu^n n^{-6}$ with
$\mu=((\sqrt5-1)/2)^{-5}$ and an explicit positive constant $A$
\cite{BouvelEtAl2018}.  Boxed $2413$ therefore satisfies the Stanley--Wilf
property.  It also follows that no injection from $\mathfrak S_{\lfloor n/2\rfloor}$
to $\Av_n(\B(2413))$ can exist for every $n$: such an injection would force a
factorial lower bound on an exponentially bounded sequence.

Combining \cref{thm:semibaxter} with the factorial constructions in
\cite{AKV2013} leaves exactly one unresolved dihedral orbit at length four,
namely $\{2143,3412\}$.

The combination is worth recording as a single global statement.  It also
clarifies that coincidence and growth are separate questions: the
noncoincidence theorem in \cref{sec:coincidence} is not needed for the
factorial lower bound in large lengths.

\begin{theorem}
\label{thm:global-growth}
Let $\tau\in\mathfrak S_k$ with $k\ge4$.
\begin{enumerate}[label=\textup{(\roman*)}]
\item If
$\tau\notin\{2143,3412,2413,3142\}$, then $|\Av_n(\B(\tau))|\ge \left\lfloor\frac n2\right\rfloor!$,
so $\B(\tau)$ does not have the Stanley--Wilf property.
\item If $\tau\in\{2413,3142\}$, then $\B(\tau)$ has the Stanley--Wilf
property and is enumerated by the semi-Baxter numbers.
\item The Stanley--Wilf property remains open for
$\tau\in\{2143,3412\}$; these two cases are equivalent by complement.
\end{enumerate}
In particular, no boxed pattern of length at least five has the
Stanley--Wilf property.
\end{theorem}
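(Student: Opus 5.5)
The theorem assembles results already in hand, so I will prove it by checking each part against the appropriate earlier statement.

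For part~(i), I will invoke \cite[Theorem~3]{AKV2013}, recalled in the introduction. It gives $|\Av_n(\B(\tau))|\ge\lfloor n/2\rfloor!$ for every $\tau\in\mathfrak S_k$ with $k\ge4$ outside the list \eqref{eq:four}. From $\lfloor n/2\rfloor!\ge (n/(2e)-1)^{\lfloor n/2\rfloor}$, roughly, we get $|\Av_n|^{1/n}\ge c\sqrt{n}\to\infty$, so the $\limsup$ in the definition of the Stanley--Wilf property is infinite. Independently, the interval-factor inheritance of Proposition~\ref{prop:interval-factor} combined with the doubling construction covers the cases with a monotone interval factor of length three. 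However, the full statement needs the good-pattern argument of \cite{AKV2013}, and I will cite it rather than reprove it.

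For part~(ii), Theorem~\ref{thm:semibaxter} gives $|\Av_n(\B(2413))|=|\Av_n(\B(3142))|=\SB_n$. The asymptotics $\SB_n\sim A\mu^n n^{-6}$ from \cite{BouvelEtAl2018} give $\SB_n^{1/n}\to\mu<\infty$. Alternatively, I can avoid the asymptotics entirely: Lemma~\ref{lem:tightening} shows that every $\B(2413)$-avoider avoids the classical pattern $2413$, since a classical occurrence yields a vincular one by the transition argument. Marcus--Tardos then bounds the class exponentially.

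For part~(iii), I will observe that $2143^{\mathrm c}=3412$ and that complement preserves boxed patterns. Hence $\Av_n(\B(3412))$ is the image of $\Av_n(\B(2143))$ under complementation, and the two avoidance sequences are equal. No claim about their growth is made, so nothing more is needed here.

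For the final sentence, any $\tau$ of length $k\ge5$ is automatically outside \eqref{eq:four}, because those four permutations all have length four. Part~(i) therefore applies.

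The only step with any subtlety is making sure part~(i) really is quoted as covering all $k\ge4$, including every $k\ge5$, and not just $k=4$. I expect no genuine obstacle.
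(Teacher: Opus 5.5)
Your main line is essentially the paper's proof. Part (i) is \cite[Theorem~3]{AKV2013} plus Stirling, part (ii) is Theorem~\ref{thm:semibaxter} with the semi-Baxter asymptotics, part (iii) is complement symmetry, and the final sentence is the length observation. That much is fine.

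The ``alternative'' you offer for part (ii) is wrong, however, and should be discarded. Lemma~\ref{lem:tightening} does not show that $\B(2413)$-avoiders avoid classical $2413$. The inclusion runs the other way, $\Av_n(2413)\subseteq\Av_n(\B(2413))$, because every boxed occurrence is in particular a classical one.

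The transition argument in the lemma relies on the emptiness of the bounding rectangle. That emptiness is what sorts the entries between $b$ and $c$ into an upper region and a lower region. For a merely classical occurrence, an intervening entry may have value strictly between $c$ and $b$, and the argument breaks down.

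A concrete counterexample is $25314$:
\begin{itemize}
\item It contains classical $2413$ as $2514$, and this is its only occurrence.
\item The entry $3$ lies in the bounding rectangle of that occurrence, so $25314$ avoids $\B(2413)$.
\item It also has no $2\underline{41}3$ occurrence, as you can check directly.
\end{itemize}
Numerically, $|\Av_5(\B(2413))|=104>103=|\Av_5(2413)|$, so the claimed inclusion cannot hold.

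Comparison with the classical class therefore gives only the lower bound $|\Av_n(\B(2413))|\ge|\Av_n(2413)|$, and Marcus--Tardos cannot supply an upper bound. The exponential upper bound really does require the semi-Baxter enumeration, or some other argument specific to $2\underline{41}3$, exactly as in the paper.
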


\begin{proof}
Part~\textup{(i)} is \cite[Theorem~3]{AKV2013}.  Stirling's formula gives $\left(\left\lfloor\frac n2\right\rfloor!\right)^{1/n}
 \sim \sqrt{\frac{n}{2e}}$,
which tends to infinity.  Part~\textup{(ii)} is
\cref{thm:semibaxter}, and part~\textup{(iii)} follows from complement
symmetry and the fact that neither of the preceding results applies to this
orbit.  For $k\ge5$ none of the four exceptional permutations in
part~\textup{(i)} has the required length, proving the last assertion.
\end{proof}

Together with the short cases, \cref{thm:global-growth} gives an almost
complete classification over all boxed patterns.  The Stanley--Wilf property
is known to hold for the boxed patterns with underlying permutations $1,12,21,132,213,231,312,2413,3142$;
it is known to fail for every other boxed pattern except $2143$ and $3412$.

\section{Classical, vincular and bivincular coincidences}
\label{sec:coincidence}

We first record the classical case, which is now a consequence of the general
coincidence theory for mesh patterns.

\begin{proposition}
\label{prop:classical-coincidence}
Let $\tau\in\mathfrak S_k$.  Then $\Boxp(\tau)$ is coincident with a classical
pattern if and only if $k\le2$, or $k=3$  and  $\tau\in\{132,213,231,312\}$. In every case the classical pattern is $\tau$ itself.  In particular, no boxed
pattern of length at least four is coincident with a classical pattern.
\end{proposition}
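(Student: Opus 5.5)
The plan is to combine Proposition~\ref{prop:coincidence-invariants} with Theorem~\ref{thm:superfluous-mesh} and Tenner's classification of enclosed-diagonal-free boxed meshes. First I identify the classical pattern. Suppose $\Boxp(\tau)\Coin(\sigma,\varnothing)$. Proposition~\ref{prop:coincidence-invariants} gives $\sigma=\tau$, which is the claim that ``the classical pattern is $\tau$ itself''. It also gives $\operatorname{enc}(\Boxp(\tau))=\operatorname{enc}((\tau,\varnothing))=\varnothing$, since an enclosed diagonal consists of shaded squares. So coincidence with a classical pattern holds exactly when the boxed mesh $\{1,\dots,k-1\}^2$ is superfluous for $\tau$. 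By Theorem~\ref{thm:superfluous-mesh}, this happens exactly when $\operatorname{enc}(\Boxp(\tau))=\varnothing$.

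Next I determine when $\operatorname{enc}(\Boxp(\tau))=\varnothing$. I will cite \cite[Corollary~3.8]{Tenner2013}, which says this holds exactly for $\tau\in\{1,12,21,132,213,231,312\}$. For self-containment, I will also sketch a direct check.

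For $k\le2$ the claim is immediate. When $k=1$ the mesh is empty. When $k=2$ the single shaded square $(1,1)$ has the two pattern points among its corners, so it is not pointless. The remaining possible diagonal, of length two, would need squares outside the mesh.

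For $k=3$ I will check the six permutations by hand. For $123$ and $321$, Figure~\ref{fig:enclosed-diagonals} exhibits the SE- (respectively, NE-) diagonal through the two off-diagonal central squares. For the other four permutations, each of the four shaded squares has a point of $G(\tau)$ at a corner, which rules out pointless squares. A proper diagonal inside a $2\times 2$ block has length two, and I will verify that its endpoint corners fail the enclosure condition, as Figure~\ref{fig:enclosed-diagonals} shows for $132$.

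For $k\ge4$, a short counting argument shows an enclosed diagonal must exist. The $(k-1)^2$ shaded squares have $k^2$ interior grid vertices among them, of which only $k$ are plot points. Take a shaded square all of whose corners are non-points; if one exists, it is pointless. Otherwise every internal square touches a point, and I will argue via the structure of $\tau$ that some length-two diagonal is enclosed.

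The main obstacle is making this last step rigorous. Rather than reprove it, I will rely on Tenner's corollary, since the paper already assumes it.
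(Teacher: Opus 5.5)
Your proposal is correct and follows the paper's proof essentially verbatim. Proposition~\ref{prop:coincidence-invariants} forces the classical pattern to be $\tau$, Theorem~\ref{thm:superfluous-mesh} reduces coincidence to $\operatorname{enc}(\Boxp(\tau))=\varnothing$, and \cite[Corollary~3.8]{Tenner2013} (equivalently \cite[Proposition~1]{AKV2013}) supplies the list; your counting sketch for $k\ge4$ is only heuristic, but like the paper you rest that case on the citation, so nothing is missing.
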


\begin{proof}
By Proposition \ref{prop:coincidence-invariants}, a coincident classical pattern would
have to be $\tau$ itself.  By \cref{thm:superfluous-mesh}, coincidence with
$\tau$ holds exactly when the full internal box has no enclosed diagonal.
Applying that criterion gives precisely the displayed list; this is also
\cite[Proposition~1]{AKV2013} and
\cite[Corollary~3.8]{Tenner2013}.
\end{proof}

The answer changes for vincular patterns, but only in small lengths.

\begin{theorem}
\label{thm:vincular-classification}
Let $V$ be a vincular pattern of length $k$ with at least one bond.  Then
$\Boxp(\tau)\Coin V$ if and only if the pair occurs in the following list:
\begin{align*}
k=2:\quad&
 \Boxp(12)\Coin\underline{12},
 &\Boxp(21)\Coin\underline{21};\\
k=3:\quad&
 \Boxp(132)\Coin\underline{13}2,
 &\Boxp(213)\Coin2\underline{13},\\
&\Boxp(231)\Coin2\underline{31},
 &\Boxp(312)\Coin\underline{31}2;\\
k=4:\quad&
 \Boxp(2413)\Coin2\underline{41}3,
 &\Boxp(3142)\Coin3\underline{14}2.
\end{align*}
In particular, there is no boxed--vincular coincidence in length $k\ge5$.
\end{theorem}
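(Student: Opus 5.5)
The plan is to dispose of sufficiency quickly and put the work into necessity, using the invariants of Proposition~\ref{prop:coincidence-invariants}.

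\emph{Sufficiency.} For $k=2$ both sides say that some two entries of the required relative order are adjacent in position; an empty bounding rectangle forces this adjacency, and conversely an adjacent pair has an empty rectangle. For $k=3$ I would treat $\Boxp(132)\Coin\underline{13}2$ directly and obtain the other three by reverse and complement, which preserve vincular patterns. The argument is a tightening argument modelled on the first half of Lemma~\ref{lem:tightening}. Read the entries between the selected $1$ and $3$; each lies either above the $3$ or below the $1$. Then locate an adjacent transition from low to high, which yields a bonded occurrence. For the converse, use the external-deletion characterization recalled in Section~\ref{sec:interval}, or alternatively the fact that $\Boxp(132)\Coin 132$ by Proposition~\ref{prop:classical-coincidence}. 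For $k=4$ the pair $\Boxp(2413)\Coin 2\underline{41}3$ is Lemma~\ref{lem:tightening}, and reverse turns it into $\Boxp(3142)\Coin3\underline{14}2$.

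\emph{Necessity.} Suppose $\Boxp(\tau)\Coin V$, where $V$ is vincular with bond set $X\neq\varnothing$. By Proposition~\ref{prop:coincidence-invariants}, the underlying permutation of $V$ is $\tau$, and $\operatorname{enc}(\Boxp(\tau))=\operatorname{enc}(V)$. Regarded as a mesh, $V$ shades every square of column $x$ for each $x\in X$, including the boundary squares $(x,0)$ and $(x,k)$. The square $(x,0)$ is pointless unless $1\in\{\tau_x,\tau_{x+1}\}$. A pointless square is an enclosed diagonal, but it cannot belong to $\operatorname{enc}(\Boxp(\tau))$, because the boxed mesh shades no boundary squares. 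Hence $1\in\{\tau_x,\tau_{x+1}\}$, and the same reasoning with $(x,k)$ gives $k\in\{\tau_x,\tau_{x+1}\}$. Consequently $\{\tau_x,\tau_{x+1}\}=\{1,k\}$. Since $1$ and $k$ can be adjacent at only one place, $X=\{x\}$ consists of a single bond, located at the adjacent pair $1,k$ (or $k,1$).

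In the reverse direction, every enclosed diagonal of $\Boxp(\tau)$ must appear in $\operatorname{enc}(V)$, so it must consist of squares of column $x$. I will also check that column $x$ of $V$ produces no extra enclosed diagonals, for instance proper diagonals that run into the boundary rows.

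\emph{Main obstacle.} The key remaining step is a combinatorial lemma: if $k\ge5$ and $\tau$ has $1$ and $k$ adjacent at positions $x,x+1$, then the full internal box of $\tau$ has an enclosed diagonal containing a square outside column $x$. My first attempt would look for a pointless internal square in a column $a\neq x$ with $1\le a\le k-1$. Such a column contains $k-1$ internal squares, while the points $(a,\tau_a)$ and $(a+1,\tau_{a+1})$ touch at most four of them. So when $k-1>4$, that is $k\ge6$, some internal square of column $a$ is pointless; moreover a column $a\neq x$ with $1\le a\le k-1$ exists once $k\ge3$. The case $k=5$ needs care: there a column can have all four internal squares touched. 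I would then either choose a different column or exhibit a proper enclosed diagonal of length two, as in Figure~\ref{fig:enclosed-diagonals}, and finish by a short finite check of the permutations of length five with $1,5$ adjacent, reduced by reverse and complement.

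\emph{Length four.} The same analysis leaves only $\tau$ with $1,4$ adjacent in the middle positions and a compatible diagonal structure, namely $2413$ and $3142$. Permutations such as $2143$ are excluded because $1$ and $4$ are not adjacent, while $k\le3$ is consistent with the listed cases.
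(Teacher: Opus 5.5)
Your overall framework is the paper's. The boundary cells $(x,0)$ and $(x,k)$ of a bonded column force $\{\tau_x,\tau_{x+1}\}=\{1,k\}$ and a single bond. A pointless internal cell outside column $x$ then contradicts $\operatorname{enc}(\Boxp(\tau))=\operatorname{enc}(V)$. Your sufficiency part is fine: proving $\Boxp(132)\Coin\underline{13}2$ by tightening plus Proposition~\ref{prop:classical-coincidence} is a valid substitute for the paper's citation of Claesson's Lemma~21.

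There is one slip at $k=2$. An empty rectangle does \emph{not} force adjacency: the pair $2,3$ in $213$ is a boxed $12$ occurrence. The correct reason is that any permutation with an ascent has an adjacent ascent.

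The first real gap is $k=5$. Your bound of at most four touched cells in an arbitrary column $a\neq x$ only yields a pointless cell when $k\ge6$, and you leave $k=5$ to an unexecuted finite check. The missing idea is to use a column \emph{adjacent} to the bond: column $x+1$ if $x\le k-2$, or column $x-1$ if $x\ge2$. One of these exists because $k\ge3$. One of its two bounding points is $(x+1,\tau_{x+1})$, respectively $(x,\tau_x)$, whose value is $1$ or $k$. That point therefore touches only one \emph{internal} cell of the column, since its other cell lies in row $0$ or row $k$. Hence at most three of the $k-1$ internal cells are touched, and $k\ge5$ already gives a pointless cell outside column $x$. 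This disposes of all $k\ge5$ at once.

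The second gap is $k=4$, which you do not actually carry out, and the one reason you give is false. In $2143$ the entries $1$ and $4$ \emph{are} adjacent (positions $2,3$), so $2\underline{14}3$ passes the extreme-bond test. It must be excluded differently: the cell $(1,3)$ of $\Boxp(2143)$ is pointless and lies outside the bond column.

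There are twelve candidates with $\{\tau_x,\tau_{x+1}\}=\{1,4\}$. Six fall to a single pointless cell outside the bond column. The four candidates $\underline{14}23$, $23\underline{14}$, $32\underline{41}$, $\underline{41}32$ have no such cell; for $1423$ the only pointless internal cell is $(1,2)$, which is inside the bond column. So the pointless-square method you describe cannot exclude them. You need one of the following:
\begin{itemize}
\item a proper enclosed diagonal, e.g.\ the SE-diagonal $\{(2,2),(3,1)\}$ around the point $(3,2)$ of $\Boxp(1423)$, which is unshaded in $V$;
\item or, as the paper does, an explicit separator. For instance, $15324$ contains $\underline{14}23$ via $1534$ but avoids $\Boxp(1423)$, since its only classical $1423$ occurrences $1534$ and $1524$ are blocked by $2$ and $3$ respectively.
\end{itemize}
The other three candidates follow by reverse and complement. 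Without this case analysis, your claim that only $2\underline{41}3$ and $3\underline{14}2$ survive at $k=4$ is unsupported.
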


\begin{proof}
Write $V=(\sigma,X)$.  If $\Boxp(\tau)\Coin V$, the first necessary condition
in \cref{prop:coincidence-invariants} gives $\sigma=\tau$, while the second
gives
\begin{equation}\label{eq:same-enclosed-diagonals}
 \operatorname{enc}(\Boxp(\tau))=\operatorname{enc}(V).
\end{equation}
We use this equality repeatedly.

Fix a bond $c\in X$.  It shades the complete column $c$ in $V$, including the
two boundary cells $(c,0)$ and $(c,k)$, neither of which is shaded in the boxed
mesh.  If $(c,0)$ touched no point of $G(\tau)$, it would be a pointless
enclosed diagonal of $V$ that is absent from $\Boxp(\tau)$, contradicting
\eqref{eq:same-enclosed-diagonals}.  It must therefore touch one of
$(c,1)$ and $(c+1,1)$, so one of $\tau_c,\tau_{c+1}$ is $1$.  Applying the
same argument to $(c,k)$ shows that one of them is $k$.  Hence
\begin{equation}\label{eq:extreme-bond}
 \{\tau_c,\tau_{c+1}\}=\{1,k\}.
\end{equation}
Since the values $1$ and $k$ occur only once, two distinct bonds cannot both
satisfy \eqref{eq:extreme-bond}; consequently $X=\{c\}$.

Applying complement to both patterns if necessary, we may assume that $\tau_c=1$ and $\tau_{c+1}=k$. Recall that \(V\) shades only column \(c\).  Every internal cell outside
column \(c\) must therefore touch a point of \(G(\tau)\); otherwise it would
be a pointless enclosed diagonal of \(\Boxp(\tau)\) that is absent from \(V\),
contradicting \eqref{eq:same-enclosed-diagonals}.

Suppose first that \(c\le k-2\), so that column \(c+1\) is an internal column.
Its \(k-1\) internal cells can be touched only by the two plot points $(c+1,\tau_{c+1})=(c+1,k)$ and $(c+2,\tau_{c+2})$. The first point touches only one internal cell of column \(c+1\), while the
second touches at most two.  Thus at most three of the \(k-1\) internal cells
in this column can touch \(G(\tau)\).  Since \(k\ge5\), at least one of these
cells is pointless, a contradiction.  Hence \(c=k-1\).

Similarly, if \(c\ge2\), consider the internal column \(c-1\).  Its cells can
be touched only by $(c-1,\tau_{c-1})$ and $(c,\tau_c)=(c,1)$. Again, the extreme-valued point touches only one internal cell and the other
point touches at most two.  For \(k\ge5\), this leaves a pointless cell in
column \(c-1\), so \(c=1\).

We have obtained both \(c=k-1\) and \(c=1\), which would force \(k=2\).
This contradicts \(k\ge5\), and therefore no boxed--vincular coincidence can
occur in length at least five.

It remains to settle $k\le4$.

For $k=2$, condition \eqref{eq:extreme-bond} leaves only
$\underline{12}$ and $\underline{21}$.  A permutation contains a classical
$12$ if and only if it has an adjacent ascent: a permutation with no adjacent
ascent is decreasing.  Hence $12\Coin\underline{12}$, and similarly
$21\Coin\underline{21}$.  Combining these observations with
\cref{prop:classical-coincidence} gives the two asserted boxed coincidences.

For $k=3$, \eqref{eq:extreme-bond} leaves exactly the four vincular
patterns displayed in the theorem.  In Claesson's generalized-pattern
notation, Lemma~21 of \cite{Claesson2001} proves $(b\text{--}a\text{--}c)\Coin(b\text{--}ac)$,
 that is, $213\Coin2\underline{13}$.

Reverse and complement give the other three identities.  Each of the four
underlying classical patterns is coincident with its boxed version by
\cref{prop:classical-coincidence}, proving all four length-three cases.

Finally let $k=4$.  Equation \eqref{eq:extreme-bond} gives twelve candidates.
The following six are impossible; the indicated cell is pointless and shaded
in the boxed mesh, but is not shaded in the vincular mesh, contradicting
\eqref{eq:same-enclosed-diagonals}.
\begin{center}
\small
\begin{tabular}{c@{\quad}c@{\qquad}c@{\quad}c}
\toprule
vincular pattern&cell&vincular pattern&cell\\
\midrule
$\underline{14}32$&$(2,1)$&$\underline{41}23$&$(2,3)$\\
$2\underline{14}3$&$(1,3)$&$3\underline{41}2$&$(1,1)$\\
$23\underline{41}$&$(2,1)$&$32\underline{14}$&$(2,3)$\\
\bottomrule
\end{tabular}
\end{center}
The six candidates not eliminated in this way are $\underline{14}23$, $23\underline{14}$, $2\underline{41}3$, $3\underline{14}2$, $32\underline{41}$, and $\underline{41}32$.

The remaining four noncoincidences have the following explicit certificates.
In the last column we list every occurrence of the underlying classical
pattern in the separator; the entry following the semicolon is the omitted
fifth entry, which lies in its bounding rectangle and therefore blocks it from
being boxed.
\begin{center}
\small
\begin{tabular}{c@{\qquad}c@{\qquad}c@{\qquad}l}
\toprule
vincular pattern&separator&vincular occurrence&boxed candidates; blockers\\
\midrule
$\underline{14}23$&$15324$&$1534$&$1534;2\quad 1524;3$\\
$23\underline{14}$&$24315$&$2415$&$2415;3\quad 2315;4$\\
$32\underline{41}$&$42351$&$4251$&$4251;3\quad 4351;2$\\
$\underline{41}32$&$51342$&$5132$&$5132;4\quad 5142;3$\\
\bottomrule
\end{tabular}
\end{center}
Thus each separator contains the indicated vincular pattern and avoids the
corresponding boxed pattern.  The third candidate is the coincidence proved
in Lemma \ref{lem:tightening}, and the fourth is its reversal.  This completes the
classification.
\end{proof}

There is also a natural coincidence with a genuinely bivincular pattern.  It
does not merely repackage the mesh as a set of position bonds.

\begin{corollary}
\label{cor:bivincular}
We have  $\Boxp(2413)\Coin 2\underline{41}3\Coin(2413,\{2\},\{2\})$.
\end{corollary}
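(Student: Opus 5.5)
The first coincidence is Lemma~\ref{lem:tightening}. It therefore remains to prove $2\underline{41}3\Coin(2413,\{2\},\{2\})$. Here the bivincular pattern $(2413,\{2\},\{2\})$ requires that the entries playing $4$ and $1$ be adjacent in position. It also requires that the entries of ranks $2$ and $3$, namely those playing $2$ and $3$, be consecutive values. One direction is trivial: every occurrence of the bivincular pattern is an occurrence of $2\underline{41}3$. So we must show that a permutation containing $2\underline{41}3$ also contains the bivincular pattern.

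The plan is to tighten the value gap while keeping the adjacent pair fixed. Take an occurrence $a,b,c,d$ of $2\underline{41}3$: positions $i_a<i_b<i_c<i_d$ with $i_c=i_b+1$ and values $c<a<d<b$. Among all such occurrences with this same adjacent pair $b,c$, choose one with $d-a$ minimal. If $d-a>1$, pick any value $x$ with $a<x<d$; then $c<x<b$. The entry $x$ is neither $b$ nor $c$, so it lies either strictly left of $b$ or strictly right of $c$. If $x$ lies left of $b$, then $x,b,c,d$ is an occurrence with smaller gap $d-x$. If $x$ lies right of $c$, then $a,b,c,x$ is an occurrence with smaller gap $x-a$. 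Either case contradicts minimality. Hence $d=a+1$, so the entries playing $2$ and $3$ are consecutive values, and we have a bivincular occurrence.

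A sentence will note that this shading genuinely differs from a pure position-bond repackaging: the value bond shades row $2$ entirely. There is essentially no obstacle. The only point to check is that the pair $b,c$ occupies exactly the two positions $i_b,i_b+1$, so every intermediate value must fall on one side of it. Adjacency guarantees this, which is what makes the argument go through.
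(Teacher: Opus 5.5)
Your proposal is correct and follows essentially the same route as the paper: you invoke Lemma~\ref{lem:tightening} for the first coincidence, then take an occurrence of $2\underline{41}3$ minimizing $d-a$ and replace $a$ or $d$ by any intermediate value, which must lie left of $b$ or right of $c$ because $b$ and $c$ are adjacent. Fixing the adjacent pair $b,c$ while minimizing is a harmless variant of the paper's minimization over all occurrences.
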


\begin{proof}
By Lemma \ref{lem:tightening}, it is enough to show that among occurrences $a,b,c,d$ of
$2\underline{41}3$ ($c<a<d<b$ and $b$ and $c$ are adjacent) there is one satisfying $d=a+1$.  Choose such an occurrence
$a,b,c,d$ minimizing $d-a$.  If $a<e<d$ for some entry $e$ of the permutation,
then, because $b$ and $c$ are adjacent, $e$ lies either to the left of $b$ or
to the right of $c$.  In the first case $e,b,c,d$ is a new occurrence with a
smaller gap; in the second, $a,b,c,e$ is.  Both contradict minimality.  Hence
$d=a+1$.  The reverse implication is immediate.
\end{proof}

The three coincident meshes in Corollary \ref{cor:bivincular} are shown in
Figure~\ref{fig:three-meshes}.  The last has both a complete shaded column and a
complete shaded row, so it is genuinely bivincular.

\begin{figure}[ht]
\centering
\begin{tabular}{c@{\qquad}c@{\qquad}c}
$\pattern{scale=0.68}{4}{1/2,2/4,3/1,4/3}{1/1,1/2,1/3,2/1,2/2,2/3,3/1,3/2,3/3}$
&$\pattern{scale=0.68}{4}{1/2,2/4,3/1,4/3}{2/0,2/1,2/2,2/3,2/4}$
&$\pattern{scale=0.68}{4}{1/2,2/4,3/1,4/3}{0/2,1/2,2/0,2/1,2/2,2/3,2/4,3/2,4/2}$\\
$\Boxp(2413)$&$2\underline{41}3$&$(2413,\{2\},\{2\})$
\end{tabular}
\caption{A boxed, a vincular and a genuinely bivincular presentation of the
same containment condition.}
\label{fig:three-meshes}
\end{figure}

\begin{theorem}
\label{thm:bivincular-bound}
Let $\tau\in\mathfrak S_k$, and let $B$ be a bivincular pattern of length
$k$.  If $\Boxp(\tau)\Coin B$, then $k\le4$.  In particular, boxed--bivincular coincidences do not occur in
arbitrarily large lengths.
\end{theorem}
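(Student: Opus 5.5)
We mirror the proof of \cref{thm:vincular-classification}, now using both position and value bonds. Write $B=(\sigma,X,Y)$ and assume $\Boxp(\tau)\Coin B$ with $k\ge5$. \Cref{prop:coincidence-invariants} gives $\sigma=\tau$ and $\operatorname{enc}(\Boxp(\tau))=\operatorname{enc}(B)$. The mesh of $B$ is the union of the complete columns $c\in X$ and the complete rows $y\in Y$. Since $\Boxp(\tau)$ is not classical for $k\ge4$ (\cref{prop:classical-coincidence}), at least one of $X,Y$ is nonempty. Inverse symmetry swaps columns and rows, so it carries $B$ to the bivincular pattern $(\tau^{-1},Y,X)$ and $\Boxp(\tau)$ to $\Boxp(\tau^{-1})$. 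We may therefore assume $X\ne\varnothing$.

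\emph{Step 1: boundary cells.} For $c\in X$, the cells $(c,0)$ and $(c,k)$ are shaded in $B$ but not in the box. They lie on the boundary, so neither can belong to a proper enclosed diagonal of the box's shading. The only way $B$ could have an extra enclosed diagonal through them without contradiction is if they touch $G(\tau)$ appropriately. Some care is needed here: in $B$ a longer diagonal through $(c,0)$ may now be enclosed, so the cell need not be pointless for it to matter. I would argue that if $(c,0)$ touches no graph point, then $\{(c,0)\}$ is itself a pointless enclosed diagonal of $B$ absent from the box. Hence, exactly as before, $\{\tau_c,\tau_{c+1}\}=\{1,k\}$ and $|X|\le1$. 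Symmetrically, every $y\in Y$ satisfies $\{\tau^{-1}_y,\tau^{-1}_{y+1}\}=\{1,k\}$: the values $y,y+1$ sit in the first and last positions, and $|Y|\le1$.

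\emph{Step 2: pointless cells.} Every internal cell not in the shaded column $c$ or the shaded row $y$ (if present) must touch $G(\tau)$. Otherwise it is a pointless enclosed diagonal of the box, and it is absent from $B$ because it is unshaded there. Take the normalization $\tau_c=1,\tau_{c+1}=k$, possibly after complement. Suppose $c\le k-2$. In internal column $c+1$ at most three cells touch graph points, so at least $k-4\ge1$ cells are pointless. The danger is that the row $y$ may cover one of them, so at most one pointless cell is rescued. I would count more carefully: column $c+1$ has $k-1\ge4$ internal cells, of which at most three touch points. If $k\ge6$ there are at least two pointless cells, and one row cannot cover both, giving a contradiction.

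\emph{Step 3: the hardest case, $k=5$.} Here there may be exactly one pointless cell, which must lie in row $y$, and the rescue must happen simultaneously in column $c-1$. I expect this to be the main obstacle. The plan is to use the rigid structure obtained so far. The points at positions $c,c+1$ have values $1,k$, and the points with values $y,y+1$ have positions $1,k$. This leaves only a handful of permutations of length $5$ with $c\in\{1,\dots,4\}$ compatible with the pointless-cell conditions in columns $c\pm1$ and rows $y\pm1$. I would enumerate these candidates, as in the $k=4$ table, and kill each one of two ways. Either some pointless shaded cell of the box remains uncovered by $B$, or a short separator permutation (length $6$) contains $B$ but has every classical occurrence of $\tau$ blocked by an extra point in its bounding rectangle. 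If $X$ and $Y$ are both empty after the symmetry reduction, the pattern is classical, and \cref{prop:classical-coincidence} already excludes it. This yields $k\le4$.
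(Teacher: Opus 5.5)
Your Steps 1 and 2 are sound and match the paper's argument for $k\ge6$. You should also run the same count in column $c-1$ to force $c=1$ alongside $c=k-1$, but that is routine.

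The gap is Step 3. The case $k=5$ is the only part of the theorem that goes beyond the vincular argument, and you leave it as an unexecuted enumeration, so the proposal does not prove $k\ne5$. Here is what has to be done. Take $\tau_c=1$, $\tau_{c+1}=5$ and $Y=\{d\}$; the case $Y=\varnothing$ dies immediately by your Step 2 count. The three cells of column $c+1$ outside row $d$ can be touched only by $(c+1,5)$, which reaches row $4$ only, and by $(c+2,q)$, which reaches rows $q-1,q$. This forces $d\in\{1,3\}$. The same count in column $c-1$, where $(c,1)$ reaches only row $1$, forces $d\in\{2,4\}$. Hence $c\in\{1,4\}$. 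After reversal and complement take $c=1$. The values $d,d+1$ sit in positions $1$ and $5$, so $d=1$ and $\tau_5=2$. Column $2$ then forces $\tau_3=3$. The only survivor is $\tau=15342$ with $X=Y=\{1\}$.

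For this survivor, your first way of killing candidates fails. The pointless cells of $\Boxp(15342)$ are $(1,2),(1,3),(2,1),(3,1)$. All lie in column $1$ or row $1$, hence are shaded in $B$, so the two meshes have identical pointless cells.

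The actual obstruction is the ``longer diagonal through a boundary cell'' that you noticed in Step 1 and then set aside. The boundary cells $(0,1)$ and $(1,0)$ are shaded in $B$ by row $1$ and column $1$. Together they form a proper enclosed SE-diagonal around the point $(1,1)$, which the box lacks. This contradicts \cref{prop:coincidence-invariants}, and it is how the paper finishes.

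Your separator route would also work, but you must exhibit the separator. For instance, $163452$ contains $B$ via the entries $1,6,3,4,2$. Each of its three classical $15342$ occurrences omits one of $3,4,5$, and the omitted entry lies in the bounding rectangle, so $163452$ avoids $\Boxp(15342)$.

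Without this reduction to a single candidate and an explicit obstruction for it, Step 3 is a plan rather than a proof.
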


\begin{proof}
Write $B=(\sigma,X,Y)$, where $X,Y\subseteq[k-1]$ are respectively its sets
of position and value bonds.  By Proposition \ref{prop:coincidence-invariants},
coincidence implies
\begin{equation}\label{eq:bivincular-enclosed}
 \sigma=\tau
 \qquad\text{and}\qquad
 \operatorname{enc}(\Boxp(\tau))=\operatorname{enc}(B).
\end{equation}
If $X=Y=\varnothing$, the result follows from
Proposition \ref{prop:classical-coincidence}.  If exactly one of $X,Y$ is nonempty, it
follows from \cref{thm:vincular-classification}, after applying inversion when
the bonds are value bonds.  We may therefore assume that $X$ and $Y$ are both
nonempty.

Fix $c\in X$.  The complete shaded column $c$ of $B$ contains the boundary
squares $(c,0)$ and $(c,k)$, neither of which is shaded in the boxed mesh.
Equality in \eqref{eq:bivincular-enclosed} implies that neither boundary square
can be pointless.  The first must touch a plot point of value $1$, and the
second a plot point of value $k$.  Hence $\{\tau_c,\tau_{c+1}\}=\{1,k\}$. The values $1$ and $k$ occur only once, so $X=\{c\}$.  Applying the same
argument after inversion gives, for the unique $d\in Y$,
\begin{equation}\label{eq:bivincular-extreme-row}
 \{\tau^{-1}(d),\tau^{-1}(d+1)\}=\{1,k\},
 \qquad Y=\{d\}.
\end{equation}
Thus the mesh of $B$ consists of one complete column $c$ and one complete row
$d$.

Every internal square outside row $d$ and column $c$ must touch a point of
$G(\tau)$.  Otherwise it would be a pointless enclosed diagonal of
$\Boxp(\tau)$ absent from $B$, contradicting~\eqref{eq:bivincular-enclosed}.

Suppose first that $k\ge6$ and $c\le k-2$.  Apart from the square in row $d$,
internal column $c+1$ contains $k-2\ge4$ squares that must touch
$G(\tau)$.  They can touch only the plot points in positions $c+1$ and
$c+2$.  By $\{\tau_c,\tau_{c+1}\}=\{1,k\}$, the first of these has value
$1$ or $k$ and touches only one internal square of the column; the second
touches at most two.  This is impossible, so $c=k-1$.  The analogous argument
in column $c-1$, when $c\ge2$, gives $c=1$, a contradiction.  Hence $k\le5$.

It remains to exclude $k=5$.  Complementing if necessary, assume $\tau_c=1$ and $\tau_{c+1}=5$.

If $c\le3$, the three squares of column $c+1$ outside row $d$ must all be
touched.  The point $(c+1,5)$ touches only row $4$, while the point in position
$c+2$, writing $q=\tau_{c+2}$, touches rows $q-1$ and $q$ when these are
internal.  For the touched rows to contain $[4]\setminus\{d\}$, either
$q=2$ and $d=3$, or $q=3$ and $d=1$; in particular, $d\in\{1,3\}$. Similarly, if $c\ge2$, column $c-1$ is touched by $(c,1)$ in row $1$ and by
the point in position $c-1$.  If $q=\tau_{c-1}$, the available rows are
$\{1,q-1,q\}\cap[4]$.  Covering the three required squares forces $d\in\{2,4\}$. Therefore $c$ cannot lie in $\{2,3\}$, and so $c\in\{1,4\}$.

After reversing and, if necessary, complementing, we may take $c=1$, $\tau_1=1$, and $\tau_2=5$. Equation \eqref{eq:bivincular-extreme-row} says that the values $d,d+1$ occupy
the first and last positions.  It follows that $d=1$ and $\tau_5=2$.
The preceding column-$2$ argument then forces $\tau_3=3$, and hence
$\tau=15342$.

The mesh of $B$ now contains the two boundary squares $D=\{(0,1),(1,0)\}$. They form a proper enclosed SE-diagonal surrounding the plot point $(1,1)$.
Neither square is shaded in $\Boxp(15342)$, so $D$ is absent from its set of
enclosed diagonals.  This contradicts \eqref{eq:bivincular-enclosed} and rules
out $k=5$.
\end{proof}

\section{The boxed mesh pattern
\texorpdfstring{$123$}{123}}\label{sec:123}

No closed formula is known for the number of permutations avoiding
$\B(123)$.  In this section we give an exact maximum-insertion identity,
use it to extend the known sequence, and prove a subfactorial upper bound.
We then obtain a pattern-independent first-moment formula; in particular, the
mean number of $\B(123)$ occurrences is $(n+2)H_n-3n\sim n\log n$, where
$H_n$ is the $n$-th harmonic number.

\subsection{Exact enumeration}

Let $\mathcal A_n=\Av_n(\B(123))$ and $a_n=|\mathcal A_n|$. For a word $w=w_1\cdots w_t$ with distinct entries, set $R_i(w)=|\{j:i<j\le t,\ w_j>w_i\}|$ and $\mathcal Q_t=\{w\in\mathfrak S_t:R_i(w)\ne1\text{ for all }i\}$.
 We call the gap after the first $t$ entries of $\pi$ {\em active} if insertion of the
new maximum in that gap preserves boxed $123$ avoidance.

\begin{theorem}\label{thm:active}
Let $\pi\in\mathcal A_n$, and let $w=\pi_1\cdots\pi_t$.  The gap following
$w$ is active if and only if $\st(w)\in\mathcal Q_t$.  Moreover, $|\mathcal Q_0|=1$ and $|\mathcal Q_t|=(t-1)!$ for $t\ge1$. Consequently,
\begin{equation}\label{eq:activeformula}
 a_{n+1}=\sum_{\pi\in\mathcal A_n}\sum_{t=0}^{n}
 \one\!\left[\st(\pi_1\cdots\pi_t)\in\mathcal Q_t\right].
\end{equation}
Here \(\one[\mathcal E]\) denotes the indicator of a statement
\(\mathcal E\): it equals \(1\) when \(\mathcal E\) is true and \(0\)
otherwise.  
\end{theorem}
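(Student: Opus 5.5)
The plan is to analyse insertion of the new maximum $n+1$ into the gap after position $t$. Write $\sigma$ for the resulting permutation of length $n+1$ and $w=\pi_1\cdots\pi_t$.

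First, I classify the boxed $123$ occurrences of $\sigma$.
\begin{itemize}
\item Any occurrence not using $n+1$ uses only entries of $\pi$. Its bounding rectangle has values at most its own maximum, which is $<n+1$. So the inserted point cannot lie in that rectangle, and the occurrence would already be a boxed occurrence in $\pi\in\mathcal A_n$. This is impossible.
\item Hence any new occurrence contains $n+1$. Since $n+1$ is the largest value, it must play the role of $3$, so it is the rightmost selected entry.
\item The occurrence is therefore $x=w_i$, $y=w_j$, $n+1$ with $i<j\le t$ and $x<y$.
\item The bounding rectangle consists of positions strictly between $i$ and the position of $n+1$, and values strictly between $x$ and $n+1$. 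Every later entry of $w$ has value below $n+1$, so the emptiness condition says exactly that no entry $w_l$ with $i<l\le t$, $l\ne j$, exceeds $w_i$.
\end{itemize}
So a boxed occurrence exists iff some $i$ has exactly one larger entry to its right within $w$, i.e.\ $R_i(w)=1$. Conversely, if $R_i(w)=1$, the unique larger later entry $y$ gives such an occurrence. Since $R_i$ depends only on relative order, this proves that the gap is active iff $\st(w)\in\mathcal Q_t$.

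For the count $|\mathcal Q_t|=(t-1)!$, I will use the standard right-to-left encoding of a permutation. For $w\in\mathfrak S_t$, the number $R_i(w)$ equals (rank of $w_i$ from the top among $w_i,\ldots,w_t$) minus one. Therefore it takes any value in $\{0,\ldots,t-i\}$. Moreover, the vector $(R_1,\ldots,R_t)$ ranges bijectively over $\prod_{i}\{0,\ldots,t-i\}$: reconstruct $w$ from the right, choosing each $w_i$'s relative rank in the suffix. The condition $R_i\ne1$ then removes exactly one option for each $i\le t-1$, and none for $i=t$, where only $0$ is available. This gives $\prod_{i=1}^{t-1}(t-i)=(t-1)!$ for $t\ge1$, and the empty product gives $|\mathcal Q_0|=1$.

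Finally, for \eqref{eq:activeformula}, I need that deleting $n+1$ from any $\sigma\in\mathcal A_{n+1}$ yields an element of $\mathcal A_n$. Any boxed occurrence in the deleted permutation remains boxed in $\sigma$, because the maximum cannot enter its rectangle; this is the same observation as the first bullet above. Thus every $\sigma\in\mathcal A_{n+1}$ arises uniquely from a pair consisting of $\pi\in\mathcal A_n$ and an active gap $t\in\{0,\ldots,n\}$. Summing the indicator of activity over all pairs gives the identity.

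I expect no serious obstacle. The only point needing care is checking that the rectangle condition involves only entries of $w$, and not entries after the inserted maximum. This holds because the inserted maximum is the rightmost selected point of the occurrence.
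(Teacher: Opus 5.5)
Your proposal is correct and takes essentially the same route as the paper. Any new occurrence must end at the inserted maximum, the emptiness condition reduces to $R_i(w)=1$, and the count comes from the bijective code $(R_1,\ldots,R_t)\in\prod_i\{0,\ldots,t-i\}$, which is the paper's complementary Lehmer code $R_i=t-i-\ell_i$. Your explicit check that deleting $n+1$ from an element of $\mathcal A_{n+1}$ leaves an element of $\mathcal A_n$ fills in a step the paper leaves implicit when deriving \eqref{eq:activeformula}.
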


\begin{proof}
Any new boxed $123$ occurrence must use the inserted maximum as its last and
largest selected entry.  If its first selected entry is $w_i$, the boxed
condition says that exactly one entry to the right of $w_i$ in the prefix $w$
is larger than $w_i$: this unique entry is the middle selected point.  This
condition is also sufficient, since every other entry between $w_i$ and the
new maximum is then below $w_i$ and hence outside the bounding rectangle.
Thus the gap is inactive precisely when some $R_i(w)$ equals one.

For the enumeration, recall that the \emph{Lehmer code} of
\(w=w_1\cdots w_t\in\mathfrak S_t\) is
\[
 \ell_i(w)=|\{j:i<j\le t,\ w_j<w_i\}|,
 \qquad 1\le i\le t.
\]
The map $w\longmapsto(\ell_1(w),\ldots,\ell_t(w))$ is a bijection between \(\mathfrak S_t\) and the set of vectors satisfying
$0\le \ell_i\le t-i$. Indeed, the permutation can be reconstructed successively by choosing \(w_i\)
to be the \((\ell_i+1)\)-st smallest of the values not yet used.

Since every entry to the right of \(w_i\) is either smaller or larger than
\(w_i\), we have $R_i(w)=t-i-\ell_i(w)$. Thus \((R_1(w),\ldots,R_t(w))\), which we call the
\emph{complementary Lehmer code}, is also a bijective code in which the
coordinates may independently take the values
\[
 R_i(w)\in\{0,1,\ldots,t-i\}.
\]
For \(i<t\), excluding the value \(1\) leaves \(t-i\) choices, while
\(R_t(w)=0\) has one choice.  Therefore
$
 |\mathcal Q_t|
 =\prod_{i=1}^{t-1}(t-i)
 =(t-1)!.
$
\end{proof}

Iterating \eqref{eq:activeformula} by maximum insertion provides an efficient
recursive generation and gives
\begin{equation}\label{eq:123data}
\begin{aligned}
 (a_n)_{n=0}^{17}={}&1,1,2,5,15,51,194,810,3675,17935,\\
                    &93481,517129,3021133,18559966,119468349,\\
                    &803213656,5624984106,40932781511.
\end{aligned}
\end{equation}
The terms through $n=12$ agree with OEIS entry A201168
\cite{OEIS201168}, which listed no further terms; the five terms $a_{13},\ldots,a_{17}$ are computed here.
In particular $a_6=194$, which differs from the value recorded in
\cite{AKV2013}; we have verified $a_6=194$ by exhaustive inspection of all
$720$ permutations of length six.  

Finding a closed coefficient formula for the sequence in \eqref{eq:123data} remains an open problem.  One possible route is to refine $\mathcal A_n$ by the set of prefix lengths
$t$ for which the complementary Lehmer code avoids the digit one.
\Cref{thm:active} identifies exactly the statistic that a successful refinement
must track.

\subsection{A subfactorial upper bound}

The factorial lower bound for $a_n$ rules out the Stanley--Wilf property, but
it does not determine how close the avoidance sequence is to $n!$.  The next
result gives a genuinely subfactorial upper bound.

\begin{theorem}\label{thm:box123-upper}
Let $0<x\le1$ satisfy
\begin{equation}\label{eq:box123-x-condition}
 2x+\sqrt{x}\le x(1+\sqrt{x})^2.
\end{equation}
Then, for every $n\ge1$,
\begin{equation}\label{eq:box123-parametric-bound}
 a_n\le 2^{5n}n^{\,n\log_2(1+\sqrt{x})}.
\end{equation}
The smallest admissible value is $x^*=\left(2\cos\frac{\pi}{7}-1\right)^2$, and hence
\begin{equation}\label{eq:box123-upper-bound}
 a_n\le 2^{5n}n^{\beta n},
 \qquad
 \beta=\log_2\!\left(2\cos\frac{\pi}{7}\right)\approx0.8496.
\end{equation}
\end{theorem}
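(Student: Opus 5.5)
The plan is a divide-and-conquer on values: split a permutation into its lower and upper halves and show that boxed-$123$ avoidance severely restricts how the two halves can interleave. For $\pi\in\mathcal A_n$, put $m=\lfloor n/2\rfloor$, $L=\pi|_{[1,m]}$ and $U=\pi|_{[m+1,n]}$. By \cref{thm:interval}, any boxed $123$ occurrence in $\st(L)$ or $\st(U)$ is a consecutive factor of a restriction of $\pi$ to a subinterval of values, hence a boxed occurrence in $\pi$. So both halves, after standardization, lie in $\mathcal A_m$ and $\mathcal A_{n-m}$. The permutation $\pi$ is recovered from $\st(L)$, $\st(U)$ and the word $s\in\{\mathrm{L},\mathrm{U}\}^n$ recording which positions carry low values. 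This gives
\[
 a_n\le a_m\,a_{n-m}\cdot N_n,
\]
where $N_n$ is the number of interleaving words that are compatible with avoidance.

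The target bound tells us what $N_n$ must be. If $f(n)=2^{5n}n^{\beta' n}$ with $\beta'=\log_2(1+\sqrt x)$, then $f(n)/\bigl(f(m)f(n-m)\bigr)$ is about $2^{\beta' n}=(1+\sqrt x)^n$, up to a factor $2^{O(n)}$ that the generous constant $2^{5n}$ is meant to absorb. So the aim is a bound of the form $N_n\le C^n(1+\sqrt x)^n$, rather than the trivial $\binom nm\le 2^n$, which would only give factorial growth. Odd $n$ and the rounding of $m$ are handled by monotone estimates, and the induction starts at small $n$ where $2^{5n}$ trivially dominates $n!$.

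To bound $N_n$, I will look for the local constraint that avoidance imposes on $s$ together with a little extra data. The natural candidate is the following. Consider an ascent from the top of $L$ into $U$, or a configuration where a low entry is followed by high entries with no intervening low entry larger than it. Here the maximal low entry of an interval acts as the first point of a boxed $123$ whose later points lie in $U$. This should forbid certain patterns such as $\mathrm{L}\,\mathrm{U}\,\mathrm{U}$ under suitable rank conditions.

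Since $s$ alone is not constrained, I expect to have to weight. Each $\mathrm{L}$-run and $\mathrm{U}$-run gets a weight, powers of $x$ and $\sqrt x$, and the weighted sum over compatible triples is then bounded by a transfer-matrix induction on the length of $s$. The quantity $(1+\sqrt x)^2$ is the growth factor per two letters. The hypothesis $2x+\sqrt x\le x(1+\sqrt x)^2$ is exactly the one-step inequality saying that the three ways to extend a state (weights $x$, $x$, $\sqrt x$) cost at most $x$ times that growth factor.

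The main obstacle is this middle step: pinning down the right local forbidden configuration and a weighting under which the inductive inequality reduces precisely to the stated condition. It may be necessary to let the weight depend on the relative rank of the last low entry, in the spirit of the complementary Lehmer code of \cref{thm:active}. My first attempt will be to track only whether the previous letter was $\mathrm{L}$ and whether the last $\mathrm{L}$ is a left-to-right maximum of $L$.

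Finally, minimizing $x$ amounts to solving the equality case. Setting $y=\sqrt x$, the condition becomes $y^3+y^2-2y-1\ge0$ after dividing by $y$. The largest root of this cubic is $2\cos(2\pi/7)$, and checking the algebra gives $1+y=2\cos(\pi/7)$, so $x^*=(2\cos(\pi/7)-1)^2$ and $\beta=\log_2(2\cos(\pi/7))\approx0.8496$.
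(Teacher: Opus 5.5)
\textbf{Gap: the unweighted recursion cannot work.} Your value split, the encoding by $(\st(L),\st(U),s)$, and the observation that both halves avoid $\B(123)$ match the paper. But the step that carries the whole proof is left open, and the recursion $a_n\le a_m a_{n-m}N_n$ you set up cannot deliver it.

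First, the $2^{5n}$ does not absorb a per-step factor $C^n$. For $n=2m$ and $f(n)=2^{5n}n^{\beta' n}$ one has $f(n)/f(m)^2=(1+\sqrt x)^n$ exactly, because the $2^{5n}$ cancels. A loss $C^n$ at every call therefore compounds over the $\log_2 n$ levels into $n^{n\log_2 C}$, which changes the exponent. So you would need $N_n\le K(1+\sqrt x)^n$ with $K$ constant.

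That is false for every $x<1$. If $\st(L)$ and $\st(U)$ are both decreasing, $\pi$ has no increasing triple at all, so all $\binom{n}{m}\ge 2^n/(n+1)$ words $s$ are compatible. Weights placed only on runs of $s$ do not help, since a bound on the unweighted $a_n$ must still count these words with weight one.

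\textbf{The missing idea: induct on a weighted class sum.} Work with $Z_n(x)=\sum_{\pi\in\mathcal A_n}x^{\operatorname{asc}(\pi)}$ and let the parameter change under the split.
\begin{itemize}
\item The ascents of $\pi$ are exactly its low-to-high adjacencies, plus those ascents of each half not separated by an entry of the other half.
\item Bound $x^{T}$ by $x^{T/2}x^{|D|/2}$. This is valid because the number $T$ of low-to-high adjacencies is at least the number $|D|$ of occupied high slots. Then every occupied low gap $i\ge1$ and every occupied high slot has weight $\sqrt x$, so an unconstrained index contributes $1+\sqrt x$.
\item The constraints needed are elementary and have nothing to do with left-to-right maxima. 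Neither half has a double ascent, since intervening entries of the other half lie outside the value range. Also, no three increasing entries of $\pi$ are adjacent. Hence an unseparated ascent $L_i<L_{i+1}$ cannot be followed immediately by a high entry, and a low entry cannot immediately precede an unseparated ascent $H_j<H_{j+1}$.
\item Each ascent of a half then gives its two indices (gaps $i,i+1$, or slots $j-1,j$) total weight $x+\sqrt x+x$ instead of $(1+\sqrt x)^2$. That is a factor $\tau(x)=(2x+\sqrt x)/(1+\sqrt x)^2$ per ascent, whence
\[ Z_n(x)\le\frac{2(1+\sqrt x)^{n-1}}{\tau(x)}\,Z_{\lfloor n/2\rfloor}(\tau(x))\,Z_{\lceil n/2\rceil}(\tau(x)). \]
\item Condition \eqref{eq:box123-x-condition} is exactly $\tau(x)\le x$, so monotonicity of $Z_h$ closes the recursion. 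The bound $a_n\le x^{-n}Z_n(x)$ converts back to $a_n$ once, at the end.
\end{itemize}
Your reading of the hypothesis as ``weights $x,x,\sqrt x$'' is right, but it only has force inside this weighted induction.

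\textbf{Algebra slip.} With $y=\sqrt x$ the condition is $y^3+2y^2-y-1\ge0$. Your $y^3+y^2-2y-1$ is its reciprocal polynomial, satisfied by $1/\sqrt x$ rather than $\sqrt x$. It is negative on all of $(0,1]$, so it would leave no admissible $x$, although $x=1$ is admissible. Its root $2\cos\frac{2\pi}{7}$ also does not satisfy $1+y=2\cos\frac{\pi}{7}$. Your final $x^*$ comes out right only because $(2\cos\frac{\pi}{7}-1)\cdot 2\cos\frac{2\pi}{7}=1$. You also still need the sign analysis showing that the admissible set is exactly $[x^*,1]$.
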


Consequently,
\begin{equation}\label{eq:box123-growth-exponent}
 \frac12\le
 \liminf_{n\to\infty}\frac{\log a_n}{n\log n}
 \le\limsup_{n\to\infty}\frac{\log a_n}{n\log n}
 \le\beta<1.
\end{equation}
Here the lower bound follows from the doubled-letter injection recalled in the
introduction, together with the fact that $a_n$ is nondecreasing.  Indeed,
inserting a new maximum in front of an avoider creates no boxed $123$
occurrence, because the new entry has no entry to its left and can only play
the last selected point; hence
$a_n\ge a_{2\lfloor n/2\rfloor}\ge\lfloor n/2\rfloor!$ for every $n$.  We call
$\limsup_{n\to\infty}\log a_n/(n\log n)$ the \emph{growth exponent} of
$\Av(\B(123))$; by \eqref{eq:box123-growth-exponent} it lies between
$\tfrac12$ and $\beta$.  The upper bound also shows that a uniformly random
permutation avoids $\B(123)$ with probability at most $n^{-(1-\beta)n+O(n/\log n)}$.

To prove the theorem, for a permutation $\pi$ let
$\operatorname{asc}(\pi)$ be its number of ascents, and, for $0<x\le1$, set
\[
 Z_n(x)=\sum_{\pi\in\mathcal A_n}x^{\operatorname{asc}(\pi)},
 \qquad
 \tau(x)=\frac{2x+\sqrt{x}}{(1+\sqrt{x})^2}.
\]
For $0<x\le1$ one has $0<\tau(x)\le1$.

\begin{lemma}\label{lem:box123-halving}
For $n\ge2$, with $h_1=\lfloor n/2\rfloor$ and
$h_2=\lceil n/2\rceil$, one has
\begin{equation}\label{eq:box123-weighted-recurrence}
 Z_n(x)\le
 \frac{2(1+\sqrt{x})^{n-1}}{\tau(x)}
 Z_{h_1}(\tau(x))Z_{h_2}(\tau(x)).
\end{equation}
\end{lemma}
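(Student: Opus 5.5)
The plan is to split $\pi\in\mathcal A_n$ by values into a low part and a high part. Let $L=\pi|_{[1,h_1]}$ and $H=\pi|_{[h_1+1,n]}$, standardized, so that $L\in\mathfrak S_{h_1}$ and $H\in\mathfrak S_{h_2}$. By \cref{thm:interval}, any restriction of a restriction to a value interval is again a restriction to a value interval. Hence $\st(L)\in\mathcal A_{h_1}$ and $\st(H)\in\mathcal A_{h_2}$, because a boxed $123$ in either would be a consecutive $123$ in some $\pi|_I$. The permutation $\pi$ is then recovered from the triple $(L,H,s)$, where $s\in\{\mathrm{L},\mathrm{H}\}^n$ records which part supplies each position. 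This makes $\pi\mapsto(L,H,s)$ injective. The goal is therefore to bound, for fixed $L$ and $H$, the weighted sum $\sum_s x^{\operatorname{asc}(\pi)}$ over admissible $s$ by
\[
2\,(1+\sqrt x)^{n-1}\,\tau(x)^{\operatorname{asc}(L)+\operatorname{asc}(H)-1}
\]
or by something slightly weaker. Summing this over $L$ and $H$ then gives \eqref{eq:box123-weighted-recurrence}.

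To obtain that bound I would read $\pi$ from left to right and charge the ascent weight of $\pi$ to its $n-1$ adjacent pairs. A pair of letters from different parts is an ascent exactly when it goes from L to H. A pair inside one part is an ascent or descent exactly as the corresponding pair in $L$ or $H$ is. The crude factor $(1+\sqrt x)^{n-1}$ comes from giving each of the $n-1$ gaps of $s$ a binary choice, with the weight $x$ of each ascent split as $\sqrt x\cdot\sqrt x$ between the two gaps that flank a switch; the leading factor $2$ accounts for the choice of the first letter. The gain must come from the ascents of $L$ and of $H$. Suppose $a<b$ are consecutive in $L$, and suppose that between them $\pi$ contains no high letter, or only a short pattern of high letters. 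I will show, using avoidance of $\B(123)$, that only restricted local patterns of $s$ around that ascent are possible. Concretely, I would try to show that the next letters cannot be arranged so as to place an entry of value above $b$ with the rectangle $[a,b]$ empty in the forbidden way. A boxed $123$ in $\pi$ using $a$, $b$ and a later larger entry is exactly the condition in \cref{thm:active}: exactly one larger entry between them in the relevant prefix.

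The target local identity is suggested by $\tau(x)(1+\sqrt x)^2=2x+\sqrt x$. Around each ascent of $L$ (respectively $H$), the two gaps of $s$ adjacent to it would contribute at most three surviving configurations, of weights $x$, $x$ and $\sqrt x$, instead of the four configurations of total weight $(1+\sqrt x)^2$. Provided these local restrictions act on disjoint pairs of gaps, or can be organized by a transfer-matrix argument along $s$, this yields the factor $\tau^{\operatorname{asc}(L)+\operatorname{asc}(H)}$. The lost $1/\tau$ in the statement covers a boundary ascent where the pairing fails.

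The main obstacle is this local step. I must identify precisely which interleaving patterns around an ascent of $L$ or of $H$ are forbidden by boxed $123$ avoidance of $\pi$, and then check that the forbidden patterns attached to different ascents do not overlap. If they overlap, I would fall back on a left-to-right transfer matrix whose state records the last part used and whether an ascent is pending, and verify that its largest eigenvalue is dominated as required. Once the lemma holds, iterating it gives \cref{thm:box123-upper} under \eqref{eq:box123-x-condition}, since that condition reads $\tau(x)\le x$, so the parameter decreases along the recursion and $Z$ is monotone in $x$.
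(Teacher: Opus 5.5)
Your overall plan matches the paper's: split by value at $h_1$, encode $\pi$ injectively, gain one factor $\tau(x)=(2x+\sqrt x)/(1+\sqrt x)^2$ per ascent from three surviving configurations of weights $x,x,\sqrt x$, and lose a single $\tau$ at a boundary. However, the step you defer as ``the main obstacle'' is the entire content of the lemma, and in your encoding it does not go through as described. In $s\in\{\mathrm L,\mathrm H\}^n$, the gaps next to $L_i$ or $H_j$ sit at positions that depend on $s$. So there is no fixed pair of binary choices to which a local factor can be attached. Even your justification of the crude factor fails. Free binary choices on the $n-1$ gaps of $s$, with weight $\sqrt x$ per switch, would need $x^{\#\mathrm{LH}}\le x^{\#\mathrm{switches}/2}$, and this is false for $s=\mathrm{HL}$. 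Indeed, for $n=2$ the four words give $3+x>2(1+\sqrt x)$ when $x<1$.

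The missing idea is to index the choices by $L$ and $H$ themselves. Let $U\subseteq\{0,\dots,h_1\}$ be the set of gaps of $L$ that contain a high entry, and let $D\subseteq\{1,\dots,h_2-1\}$ be the set of slots of $H$ that contain a low entry.
\begin{itemize}
\item \emph{Encoding and crude factor.} The permutation $\pi$ is recovered from $(L,H,U,D)$, and $|D|=|U|-1$. Moreover $\operatorname{asc}(\pi)=T+I_L+I_H$ with $T=|U\setminus\{0\}|\ge|D|$. Hence $x^T\le x^{T/2}x^{|D|/2}$, which puts $\sqrt x$ on each element of $U\setminus\{0\}$ and of $D$ and leaves index $0$ free; that free index is the factor $2$. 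After dropping the relation $|D|=|U|-1$, the sum factors over $h_1+(h_2-1)=n-1$ weighted indices.
\item \emph{Forbidden configurations.} These are just adjacent increasing triples of $\pi$, which are automatically boxed. For an ascent $i$ of $L$, one cannot have $i\notin U$ and $i+1\in U$, since this gives $L_iL_{i+1}$ followed by a high entry. For an ascent $j\ge2$ of $H$, one cannot have $j-1\in D$ and $j\notin D$, since this gives a low entry followed by $H_jH_{j+1}$.
\item \emph{Disjointness.} You leave this to a hypothetical transfer matrix, but it holds because neither $L$ nor $H$ has two consecutive ascents. Three consecutive increasing entries of $L$ form a boxed $123$ in $\pi$, since every entry between them is high and lies above the box; the argument for $H$ is symmetric.
\end{itemize}
Each pair of indices attached to an ascent then contributes $x+\sqrt x+x$. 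The ascent $1$ of $H$ has no slot $0$ and is simply dropped from the weight; this accounts for the lost factor $\tau$.
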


\begin{proof}
For $\pi\in\mathfrak S_n$, call the entries with values at most $h_1$
\emph{low} and the remaining entries \emph{high}.  Let
$L\in\mathfrak S_{h_1}$ and $H\in\mathfrak S_{h_2}$ be the patterns of the
low and high subsequences.  Number the gaps around the low entries from $0$ to
$h_1$, and number the slots between consecutive high entries from $1$ to
$h_2-1$.  Define
\begin{align*}
 U&=\{i:\text{low gap $i$ contains a high entry}\},\\
 D&=\{j:\text{high slot $j$ contains a low entry}\}.
\end{align*}
The nonempty low gaps contain the maximal blocks of high entries.  Thus
$|D|=|U|-1$, and $\pi$ is recovered from $(L,H,U,D)$ by cutting $H$ at the
slots in $D$ and placing the resulting blocks in the gaps in $U$.

Put $T=|U\setminus\{0\}|$.  This is the number of adjacent low--high pairs,
and $T\ge|D|$.  Let $I_L$ be the number of ascents $i$ of $L$ for which
$i\notin U$, and let $I_H$ be the number of ascents $j$ of $H$ for which
$j\notin D$.  For $\pi\in\mathcal A_n$ the following elementary restrictions
hold:
\begin{enumerate}[label=\textup{(\roman*)}]
\item $L\in\mathcal A_{h_1}$ and $H\in\mathcal A_{h_2}$;
\item neither $L$ nor $H$ has two consecutive ascents;
\item if $i$ is an ascent of $L$, then it is not the case that
      $i\notin U$ and $i+1\in U$;
\item if $j\ge2$ is an ascent of $H$, then it is not the case that
      $j-1\in D$ and $j\notin D$.
\end{enumerate}
For (i), a point inside the rectangle of an occurrence among the low entries
would itself be low, and similarly for the high entries.  For (ii), three
entries occupying consecutive positions of $L$ and increasing in value form a
boxed occurrence in $\pi$: between them there are no other low entries, while
all intervening high entries lie above their vertical range.  The argument for $H$ is
analogous.  In (iii) the entries $L_i,L_{i+1}$ followed by the first high
entry in gap $i+1$ occupy three consecutive positions of $\pi$ and increase;
in (iv), use the last low entry of slot $j-1$ followed by $H_j,H_{j+1}$.
These triples would be boxed occurrences, proving the restrictions.

Every adjacent ascent of $\pi$ is either low--high, low--low, or high--high,
and therefore
\begin{equation}\label{eq:box123-ascent-split}
 \operatorname{asc}(\pi)=T+I_L+I_H.
\end{equation}
Let $E_L$ be the set of ascents of $L$, and let $E_H$ be the set of ascents
$j\ge2$ of $H$.  From \eqref{eq:box123-ascent-split}, $T\ge|D|$, and
$0<x\le1$, we obtain $x^{\operatorname{asc}(\pi)}\le f(U)g(D)$, where
 $$f(U)=x^{|U\setminus\{0\}|/2}
       x^{|\{i\in E_L:i\notin U\}|},\ \ \ \ 
 g(D)=x^{|D|/2}
       x^{|\{j\in E_H:j\notin D\}|}.$$
If $1$ is an ascent of $H$, its contribution has been omitted from $g$; since
$x\le1$, this only enlarges the right-hand side.

By (ii), the pairs $\{i,i+1\}$ with $i\in E_L$ are disjoint.  In summing
$f(U)$ over all subsets $U\subseteq\{0,1,\ldots,h_1\}$ allowed by (iii), the
index $0$ contributes $2$, each
index in no such pair contributes $1+\sqrt{x}$, and each pair contributes $x+\sqrt{x}+x=2x+\sqrt{x}$; the fourth membership pattern is excluded by (iii).  Hence
\[
 \sum_U f(U)=2(1+\sqrt{x})^{h_1}\tau(x)^{|E_L|}.
\]
The same calculation, carried out over all subsets
$D\subseteq\{1,\ldots,h_2-1\}$ allowed by (iv), gives
\[
 \sum_D g(D)=(1+\sqrt{x})^{h_2-1}\tau(x)^{|E_H|}.
\]
The encoding by $(L,H,U,D)$ is injective, and every $\pi\in\mathcal A_n$
yields a pair $(U,D)$ allowed by (iii) and (iv) and satisfying
$|D|=|U|-1$.  Dropping that last relation enlarges the set of encodings summed
over, so summing over all allowed pairs $(U,D)$ gives
\begin{align*}
 Z_n(x)
 &\le 2(1+\sqrt{x})^{n-1}
 \sum_{L\in\mathcal A_{h_1}}\sum_{H\in\mathcal A_{h_2}}
 \tau(x)^{|E_L|+|E_H|}\\
 &\le \frac{2(1+\sqrt{x})^{n-1}}{\tau(x)}
 Z_{h_1}(\tau(x))Z_{h_2}(\tau(x)),
\end{align*}
because $|E_L|+|E_H|\ge
\operatorname{asc}(L)+\operatorname{asc}(H)-1$ and $\tau(x)\le1$.
\end{proof}

\begin{proof}[Proof of \cref{thm:box123-upper}]
Condition \eqref{eq:box123-x-condition} is precisely $\tau(x)\le x$.
Since $Z_h(y)$ is increasing in $y$, Lemma \ref{lem:box123-halving} yields
\[
 Z_n(x)\le\frac2{\tau(x)}(1+\sqrt{x})^{n-1}
 Z_{h_1}(x)Z_{h_2}(x).
\]
Put
\[
 F(n)=\log_2 Z_n(x),\qquad
 \gamma_x=\log_2(1+\sqrt{x}),\qquad
 c=\log_2\frac2{x^*}<1.64.
\]
The function $\tau$ is increasing on $(0,1]$, so
$\tau(x)\ge\tau(x^*)=x^*$ for admissible $x$.  Hence $F(1)=0$ and $F(n)\le \gamma_x n+c+F(h_1)+F(h_2)$. Induction gives
\[
 F(n)\le\gamma_x n\lceil\log_2n\rceil+c(2n-1).
\]
Indeed, both $h_i$ are at most
$2^{\lceil\log_2n\rceil-1}$, so substituting the two inductive bounds cancels
one halving level and gives the displayed inequality.

Since $\operatorname{asc}(\pi)\le n-1$ and $x\le1$, we have
$a_n\le x^{-n}Z_n(x)$.  Using
$\lceil\log_2n\rceil\le\log_2n+1$,
$\gamma_x\le1$, and $\log_2(1/x^*)<0.64$, we obtain
\[
 \log_2 a_n
 \le \gamma_x n\log_2n+
       (\gamma_x+3.28+0.64)n
 \le \gamma_x n\log_2n+5n,
\]
which proves \eqref{eq:box123-parametric-bound}.

Finally, with $s=\sqrt{x}$, condition
\eqref{eq:box123-x-condition} becomes
$s^3+2s^2-s-1\ge0$.  On $0<s\le1$ its unique threshold is
$s^*=2\cos(\pi/7)-1$: after setting $t=1+s$, the corresponding equation is
$t^3-t^2-2t+1=0$.  Its three roots are
$2\cos(\pi/7)$, $2\cos(3\pi/7)$, and $2\cos(5\pi/7)$, of which only the
first lies in $(1,2]$.  That $s^*$ is the only sign change on $(0,1]$ follows
from a direct inspection: writing $p(s)=s^3+2s^2-s-1$, the derivative
$p'(s)=3s^2+4s-1$ vanishes only at $s=(\sqrt7-2)/3=0.2153\ldots$, so $p$
decreases on $(0,(\sqrt7-2)/3)$ and increases on $((\sqrt7-2)/3,1]$; since
$p(0)=-1<0<1=p(1)$, the polynomial $p$ is negative on $(0,s^*)$ and
nonnegative on $[s^*,1]$.  Thus $x^*=(s^*)^2$ and
$1+\sqrt{x^*}=2\cos(\pi/7)$, proving \eqref{eq:box123-upper-bound}.
\end{proof}

\subsection{Average occurrence counts for boxed patterns}

For $\tau\in\mathfrak S_k$ define
\begin{equation}\label{eq:general-distribution-polynomial}
 F_{n,\tau}(q)=\sum_{\pi\in\mathfrak S_n}
 q^{N_{\B(\tau)}(\pi)},
\end{equation}
where $N_{\B(\tau)}(\pi)$ is the number of occurrences of $\B(\tau)$ in
$\pi$.  Its constant term is the avoidance number: $F_{n,\tau}(0)=|\Av_n(\B(\tau))|$. For uniformly random $\pi\in\mathfrak S_n$, its derivative satisfies
\begin{equation}\label{eq:first-moment-derivative}
 \frac{F'_{n,\tau}(1)}{n!}
 =\mathbb E\bigl[N_{\B(\tau)}(\pi)\bigr].
\end{equation}
The full occurrence distribution for $\B(123)$ is open, but the first moment
has a simple closed form.  In fact, it is independent of the choice of the
underlying pattern of a fixed length.

\begin{proposition}\label{prop:boxed-first-moment}
Let $k\ge2$, let $\tau\in\mathfrak S_k$, and write
$N_{\B(\tau)}(\pi)$ as in \eqref{eq:general-distribution-polynomial}.  If
$\pi$ is uniformly distributed over $\mathfrak S_n$, where $n\ge k$, then
\begin{equation}\label{eq:boxed-first-moment}
 \mathbb E\bigl[N_{\B(\tau)}(\pi)\bigr]
 =\frac{(n+k-1)(H_n-H_{k-2})-2(n-k+2)}{(k-2)!},
\end{equation}
where $H_m=\sum_{j=1}^m1/j$ is the \(m\)-th harmonic number and $H_0=0$.  In particular,
\begin{equation}\label{eq:box123-mean}
 \mathbb E\bigl[N_{\B(123)}(\pi)\bigr]
 =(n+2)H_n-3n.
\end{equation}
Moreover, for fixed $n$ and $k$, the expectation in
\eqref{eq:boxed-first-moment} is independent of the choice of
$\tau\in\mathfrak S_k$: all boxed patterns of length $k$ have the same first
moment on $\mathfrak S_n$.
\end{proposition}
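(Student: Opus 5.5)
The plan is to use linearity of expectation, organising occurrences by the \emph{window of positions} they span. A boxed occurrence is determined by its selected set $S$ of $k$ points. Let $i<j$ be the first and last positions of $S$, and put $m=j-i+1\ge k$.

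The key observation is that the bounding rectangle only involves columns $i,\dots,j$. So the empty-rectangle condition says exactly this: the $m-k$ unselected entries in positions $i,\dots,j$ all have values outside $[\min S,\max S]$. Equivalently, if the $m$ values in the window are ranked among themselves, the selected entries occupy a block of $k$ \emph{consecutive ranks}. Entries outside the window are irrelevant, as in the proof of \cref{thm:interval}. Each occurrence is counted once this way, since $S$ determines $i$ and $j$.

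Fix a window of length $m$; there are $n-m+1$ of them. For uniform $\pi$, the relative order of the $m$ window entries is a uniform element of $\mathfrak S_m$. An occurrence in this window is specified by three choices:
\begin{itemize}
\item the $k-2$ inner selected positions, in $\binom{m-2}{k-2}$ ways (positions $i$ and $j$ are forced into $S$);
\item the block of consecutive ranks, in $m-k+1$ ways;
\item the requirement that the selected positions receive the ranks of the block in the order prescribed by $\tau$.
\end{itemize}
Once all three are fixed, the remaining $m-k$ entries fill the remaining ranks arbitrarily. This gives probability $(m-k)!/m!$, independent of $\tau$. This already proves the final assertion that the first moment depends only on $k$.

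Combining the factors, the expectation equals
\[
 \sum_{m=k}^{n}(n-m+1)\binom{m-2}{k-2}\frac{(m-k+1)!}{m!}
 =\frac1{(k-2)!}\sum_{m=k}^{n}\frac{(n-m+1)(m-k+1)}{m(m-1)}.
\]
The remaining work, which I expect to be the main computational obstacle, is evaluating this sum in closed form. I would write $(n-m+1)(m-k+1)$ as a quadratic in $m$ and divide by $m(m-1)$. This gives a constant term plus partial fractions in $1/m$ and $1/(m-1)$. Summing over $m=k,\dots,n$ yields harmonic differences $H_n-H_{k-1}$ and $H_{n-1}-H_{k-2}$, and these should combine to $(n+k-1)(H_n-H_{k-2})-2(n-k+2)$.

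As a check, for $k=2$ the summand is $(n+1)/m-1$, which gives $(n+1)(H_n-1)-(n-1)=(n+1)H_n-2n$, agreeing with \eqref{eq:boxed-first-moment}. Specialising to $k=3$ gives \eqref{eq:box123-mean}.
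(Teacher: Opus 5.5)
Your proposal is correct and is essentially the paper's proof. Your window length $m$ plays the role of the paper's $d+1$, and you use the same count $(n-m+1)\binom{m-2}{k-2}$ of selected sets, the same consecutive-rank criterion giving probability $(m-k+1)!/m!$ per set, and arrive at the same sum. The partial-fraction step you left as a plan does go through: the summand equals $-1+\frac{(n+1)(k-1)}{m}+\frac{n(2-k)}{m-1}$, and rewriting $H_n-H_{k-1}$ and $H_{n-1}-H_{k-2}$ in terms of $H_n-H_{k-2}$ gives exactly $(n+k-1)(H_n-H_{k-2})-2(n-k+2)$.
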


\begin{proof}
Fix selected positions
$I=\{i_1<\cdots<i_k\}$ and put $d=i_k-i_1$.  The block from $i_1$ to
$i_k$ has length $d+1$, and its relative order is uniformly distributed over
$\mathfrak S_{d+1}$.  The selected entries form an occurrence of
$\B(\tau)$ exactly when they receive $k$ consecutive ranks in this block,
arranged in the order $\tau$.  There are $d-k+2$ choices for this interval of
ranks, after which the other $d-k+1$ ranks may be arranged arbitrarily.
Consequently,
\[
 \Pr\bigl(I\text{ is a }\B(\tau)\text{-occurrence}\bigr)
 =\frac{(d-k+2)!}{(d+1)!}.
\]
For a fixed span $d$, there are
$(n-d)\binom{d-1}{k-2}$ possible sets $I$.  Linearity of expectation therefore
gives
\begin{align*}
 \mathbb E\bigl[N_{\B(\tau)}(\pi)\bigr]
 &=\sum_{d=k-1}^{n-1}(n-d)\binom{d-1}{k-2}
       \frac{(d-k+2)!}{(d+1)!}\\
 &=\frac1{(k-2)!}\sum_{d=k-1}^{n-1}
       \frac{(n-d)(d-k+2)}{d(d+1)}\\
 &=\frac{(n+k-1)(H_n-H_{k-2})-2(n-k+2)}{(k-2)!}.
\end{align*}
Setting $k=3$ proves \eqref{eq:box123-mean}.
\end{proof}

\begin{remark}
Since
$H_n=\log n+\gamma+O(1/n),$
where $\gamma$ is the Euler--Mascheroni constant,
\eqref{eq:box123-mean} gives $\mathbb E\bigl[N_{\B(123)}(\pi)\bigr]
 =n\log n+(\gamma-3)n+O(\log n)$. Thus a uniformly random permutation of length $n$ has $n\log n+O(n)$ boxed
$123$ occurrences on average.  This identifies the natural
scale of the statistic, although a first-moment estimate alone does not imply
concentration or determine the probability of avoidance.
\end{remark}

\section{The remaining orbit
\texorpdfstring{$\{2143,3412\}$}{\{2143,3412\}}}\label{sec:remaining}

There is a useful one-sided analogue of the tightening lemma from
\cref{sec:length4}.  Recall that an occurrence of
$2\underline{14}3$ has the form $a,x,y,d$ in positional order, where $x$ and
$y$ are adjacent and $x<a<d<y$.

\begin{lemma}\label{lem:one-sided-tightening}
Every permutation containing $\B(2143)$ contains
$2\underline{14}3$.  Equivalently,
\[
 \Av_n(2\underline{14}3)\subseteq \Av_n(\B(2143))
 \qquad(n\ge0).
\]
\end{lemma}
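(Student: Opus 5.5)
This follows the first half of the proof of Lemma~\ref{lem:tightening}. Take a boxed occurrence $a,x,y,d$ of $2143$ in $\pi$, listed in positional order. Its values satisfy $x<a<d<y$, so $x$ is the minimum and $y$ the maximum of the four selected entries. The bounding rectangle spans the positions from $a$ to $d$ and the values from $x$ to $y$. By the boxed condition, every entry strictly between $x$ and $y$ in position has value either below $x$ or above $y$.

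Read the factor of $\pi$ from $x$ to $y$, and label each entry \emph{low} if its value is at most $x$ and \emph{high} if its value is at least $y$. This factor starts with a low entry ($x$) and ends with a high entry ($y$). Hence there is an adjacent pair $x',y'$, with $x'$ low immediately followed by $y'$ high, and both lie weakly between $x$ and $y$ in position. Then $x'\le x<a<d<y\le y'$.

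Positionally, $a$ precedes $x$, hence precedes $x'$, and $d$ follows $y$, hence follows $y'$. So $a,x',y',d$ is a subsequence in positional order with $x'<a<d<y'$, and $x',y'$ are adjacent. This is an occurrence of $2\underline{14}3$.

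The equivalent inclusion $\Av_n(2\underline{14}3)\subseteq\Av_n(\B(2143))$ is the contrapositive. No step here is a real obstacle. The only points to check are that the boxed condition really makes every in-between entry fall strictly outside the value range $[x,y]$, which holds because such entries lie strictly inside the position range of the rectangle, and that the positional order of $a$ and $d$ relative to the new pair is preserved.
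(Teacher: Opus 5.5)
Your proposal is correct and follows essentially the same argument as the paper: the boxed condition forces every entry strictly between the entries playing $1$ and $4$ to lie below the minimum or above the maximum, and an adjacent low-to-high transition then yields the occurrence $a,x',y',d$ of $2\underline{14}3$. The positional and strictness checks you include match those in the paper's proof.
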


\begin{proof}
Let $a,b,c,d$ be a boxed $2143$ occurrence in positional order, so $b<a<d<c$.
Every entry strictly between $b$ and $c$ in position lies either below $b$ or
above $c$; an entry with value strictly between them would lie in the empty
bounding rectangle.  Read the permutation from $b$ to $c$, regarding $b$ as
belonging to the lower region and $c$ as belonging to the upper region.  At
some point there is an adjacent transition $x,y$ from the lower region to the
upper region.  Hence $x\le b<a<d<c\le y$. The inequalities relevant to the four distinct entries $a,x,y,d$ are strict,
so these entries form an occurrence of $2\underline{14}3$.
The avoidance-class inclusion is the contrapositive.
\end{proof}

\begin{remark}
The reverse implication in Lemma \ref{lem:one-sided-tightening} is false.  For
example, $24153$ contains $2\underline{14}3$ in the entries $2,1,5,3$, but
avoids $\B(2143)$: the intervening entry $4$ obstructs the only candidate.
This is also forced by the classification in
\cref{thm:vincular-classification}.  Moreover,
$2\underline{14}3$-avoiders are the plane permutations and are enumerated by
the semi-Baxter numbers \cite{BouvelEtAl2018}.  Thus the lemma gives only the
exponential lower bound $\SB_n\le |\Av_n(\B(2143))|$. The inclusion has the wrong direction for an exponential upper bound and is
far too small to yield a factorial lower bound.  Its value is structural: it
identifies the semi-Baxter class as a canonical subclass of the remaining
boxed avoidance class and explains why tightening alone cannot resolve the
Stanley--Wilf question.
\end{remark}

We next observe that an injection from $\mathfrak S_m$ into avoiders whose
length is an affine function of $m$ is sufficient to disprove the
Stanley--Wilf property.  The observation applies to any mesh pattern,
not only to boxed ones.

\begin{proposition}
\label{prop:dilation-injection}
Let $P$ be a fixed mesh pattern.  Suppose that there is a
constant integer $c\ge1$, an integer $r$, and, for all sufficiently large
$m$, an injection
\[
 \Phi_m:\mathfrak S_m\longrightarrow\Av_{cm+r}(P),
\]
where $cm+r\ge0$.
Then $P$ does not have the Stanley--Wilf property.
\end{proposition}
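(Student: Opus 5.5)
The plan is to turn the injection into a lower bound on $|\Av_N(P)|$ that holds for all large $N$, and then show that this bound grows faster than any exponential.

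First, fix $m$ large enough that $\Phi_m$ exists, and put $N=cm+r$. Injectivity gives
\[
|\Av_{cm+r}(P)|\ge m!.
\]
On this subsequence,
\[
|\Av_N(P)|^{1/N}\ge (m!)^{1/(cm+r)}.
\]
By Stirling's formula, $\log m!=m\log m-m+O(\log m)$. Hence
\[
\frac{\log m!}{cm+r}=\frac{m\log m}{cm}\bigl(1+o(1)\bigr)\sim\frac{\log m}{c},
\]
which tends to infinity because $c\ge1$ is a fixed constant. Therefore $\limsup_{N\to\infty}|\Av_N(P)|^{1/N}=\infty$, and $P$ fails the Stanley--Wilf property.

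The limsup needs only an infinite subsequence of lengths $N$, so the arithmetic progression $N=cm+r$ already suffices. We do not need monotonicity of $|\Av_N(P)|$ in $N$. Such monotonicity can fail for a general mesh pattern, so it is better not to invoke it. The only checks remaining are that the indices $cm+r$ tend to infinity, which holds since $c\ge1$, and that they are eventually positive.

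There is no real obstacle here. The only care needed is the asymptotic comparison of $(m!)^{1/(cm+r)}$, where the additive constant $r$ does not matter as $m\to\infty$. One could also use the cruder bound $m!\ge (m/e)^m$, which gives
\[
(m!)^{1/(cm+r)}\ge (m/e)^{m/(cm+r)}\to\infty,
\]
since $m/(cm+r)\to 1/c>0$.
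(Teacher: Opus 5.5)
Your proposal is correct and follows essentially the same route as the paper: injectivity gives $|\Av_{cm+r}(P)|\ge m!$, and Stirling's formula shows $(m!)^{1/(cm+r)}\to\infty$ along this subsequence, so the $\limsup$ is infinite. Your remark that only a subsequence is needed, so no monotonicity in $N$ is required, is also implicit in the paper's argument.
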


\begin{proof}
Writing $A_n=|\Av_n(P)|$, injectivity gives $A_{cm+r}\ge m!$.  Hence, by
Stirling's formula,
\[
 A_{cm+r}^{1/(cm+r)}
 \ge (m!)^{1/(cm+r)}
 \sim \left(\frac me\right)^{1/c}\longrightarrow\infty.
\]
Thus $\limsup_{n\to\infty}A_n^{1/n}=\infty$.
\end{proof}

Consequently there is no need to insist on doubling: an embedding into
permutations of length $2m-1$, $3m$, $4m$, or, more generally, $cm+r$ for
fixed $c$ and $r$ would be equally decisive.  Increasing $c$ may provide more
room for blockers, although the extra entries can also create new occurrences.


Here is a concrete format in which such an injection may exist.  For
$\pi=\pi_1\cdots\pi_m\in\mathfrak S_m$ and
$\rho=\rho_1\cdots\rho_{m-1}\in\mathfrak S_{m-1}$, define the strict
interleaving
\begin{equation}\label{eq:interleave}
 J_\rho(\pi)=
 (2\pi_1-1,2\rho_1,2\pi_2-1,2\rho_2,\ldots,
  2\rho_{m-1},2\pi_m-1)\in\mathfrak S_{2m-1}.
\end{equation}
For $m=1$ the even word is empty.  Thus the prescribed odd entries occur in
their original order, and exactly one even entry is placed in each internal
gap.  Equivalently, one permutes the set
$\{2,4,\ldots,2m-2\}$ between the fixed odd entries
$2\pi_1-1,\ldots,2\pi_m-1$.

The odd-position entries recover the input without any auxiliary data:
\begin{equation}\label{eq:interleave-recovery}
 \pi_i=\frac{J_\rho(\pi)_{2i-1}+1}{2}.
\end{equation}
For example, if $\pi=231$ and $\rho=21$, then $J_{21}(231)=(3,4,5,2,1)$, which avoids $\B(2143)$, and its odd-position entries $3,5,1$ recover
$231$ by \eqref{eq:interleave-recovery}.

There is also a simple reason that formulations beginning with a forced
global minimum can be shortened.  If a permutation beginning with $1$ avoids
$\B(2143)$, then deleting this $1$ and reducing all other entries by one
preserves avoidance.  Conversely, adjoining a new global minimum on the left
preserves avoidance, since the minimum in the underlying pattern $2143$ is
its second entry and therefore cannot be played by the new leftmost entry.
This is a special case of the general observation that deleting a global
extremum cannot uncover a boxed occurrence: the deleted point lies outside
the vertical bounding interval of every occurrence formed by the remaining
entries.

\begin{conjecture}\label{conj:interleave}
Let $m\geq 1$. For every $\pi\in\mathfrak S_m$, there exists
$\rho\in\mathfrak S_{m-1}$ such that $J_\rho(\pi)$ avoids $\B(2143)$.
\end{conjecture}

The conjecture has been verified exhaustively for $m\le 8$. If true, choose
the lexicographically first admissible $\rho$; equation
\eqref{eq:interleave-recovery} then gives an explicit injection
\[
  \mathfrak S_m\hookrightarrow\Av_{2m-1}(\B(2143)),
  \qquad
  \pi\longmapsto J_{\rho(\pi)}(\pi),
\]
and hence, by Proposition~\ref{prop:dilation-injection}, the factorial lower
bound sought in \cite{AKV2013}. Conjecture~\ref{conj:interleave} is stronger than the growth conclusion that
it would imply, since factorial growth might arise from a different
construction. We therefore record the expected growth statement separately.

\begin{conjecture}\label{conj:2143-factorial}
The avoidance sequence of $\B(2143)$ has factorial growth, and hence
$\B(2143)$ does not have the Stanley--Wilf property.
\end{conjecture}

The following occurrence-preserving doubling provides a first step toward the
conjecture.  For $\pi=\pi_1\cdots\pi_m\in\mathfrak S_m$, define
\begin{equation}\label{eq:increasing-doubling}
 \Delta(\pi)=
 (2\pi_1-1)(2\pi_1)\,(2\pi_2-1)(2\pi_2)\cdots
 (2\pi_m-1)(2\pi_m).
\end{equation}
Thus each entry is replaced by an adjacent increasing pair of consecutive
values, and the entries in the odd positions recover $\pi$.

\begin{proposition}\label{prop:2143-doubling}
For every $\pi\in\mathfrak S_m$,
\begin{equation}\label{eq:2143-doubling-count}
 N_{\B(2143)}(\Delta(\pi))=N_{\B(2143)}(\pi).
\end{equation}
In particular, $\pi$ avoids $\B(2143)$ if and only if $\Delta(\pi)$ does.
\end{proposition}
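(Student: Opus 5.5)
The plan is to build an explicit bijection between boxed $2143$ occurrences in $\pi$ and those in $\Delta(\pi)$. Call the two entries $2\pi_i-1,2\pi_i$ of $\Delta(\pi)$ the \emph{low} and \emph{high copies} of $\pi_i$. They form the \emph{pair} of $\pi_i$, and each is the \emph{sibling} of the other. A pair occupies adjacent positions with the low copy first, and has consecutive values.

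The first step is to show that a boxed occurrence in $\Delta(\pi)$ never selects both members of one pair. If it did, the two entries would be adjacent in position, increasing, and of consecutive rank among the selected values. But in $2143$ the position-adjacent pairs are $21$, $14$, $43$: the first and last are descents and $14$ has nonconsecutive ranks. So every boxed occurrence in $\Delta(\pi)$ projects to four distinct entries of $\pi$, and these form a classical $2143$, since values of different pairs compare as the original entries do.

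The second step, which is the heart of the argument, is to show that the copy chosen in each role is forced. Let the selected points play $2,1,4,3$ in this positional order. If a selected point's sibling is unselected, the sibling is adjacent in position and value, so it must escape the bounding rectangle.
\begin{itemize}
\item Role $2$ (leftmost, non-extreme value) must be the high copy, so that the sibling lies to its left. If the low copy were chosen, the high sibling would sit just to the right, with value strictly between the selected min and max, hence inside the rectangle.
\item Role $1$ (the minimum, interior position) must be the high copy, so that the sibling lies below the minimum. The low copy would leave its sibling inside.
\item Role $4$ (the maximum, interior position) must be the low copy, so that the sibling lies above the maximum.
\item Role $3$ (rightmost, non-extreme value) must be the low copy, so that the sibling lies to its right.
\end{itemize}
Hence every boxed occurrence in $\Delta(\pi)$ is the canonical lift $2\pi_a,\,2\pi_b,\,2\pi_c-1,\,2\pi_d-1$ of a classical occurrence $\pi_a\pi_b\pi_c\pi_d$ in $\pi$. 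This gives an injection from boxed occurrences in $\Delta(\pi)$ to classical occurrences in $\pi$, and it remains to check that its image is exactly the boxed occurrences in $\pi$.

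The final step compares the empty-rectangle conditions. Suppose an unselected $\pi_j$ lies in the bounding rectangle of $\pi_a\pi_b\pi_c\pi_d$. Then both copies of $\pi_j$ lie strictly between the lifted selected points in position and in value, so the lift is blocked. Conversely, an unselected point of $\Delta(\pi)$ in the rectangle of the canonical lift is either a sibling of a selected point, which the case analysis above excludes, or a copy of some unselected $\pi_j$. In the latter case the position and value of $\pi_j$ are strictly between those of the extreme selected entries in $\pi$, so $\pi_j$ blocks the original occurrence. Thus the canonical lift is a bijection, which proves \eqref{eq:2143-doubling-count}, and the avoidance statement is the special case where the count is zero. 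The main obstacle is the bookkeeping in the second step: one must verify, role by role, that each rejected copy choice really places the sibling strictly inside the rectangle, and that each accepted choice places it strictly outside.
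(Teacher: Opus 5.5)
Your proposal is correct and follows essentially the same route as the paper. Both arguments use the same three steps: the lone adjacent ascent $14$ of $2143$ rules out two selected entries from one block; the sibling-in-the-rectangle argument forces the canonical lift $2a,2b,2c-1,2d-1$; and an intervening entry $x$ with $b<x<c$ puts both of its copies inside the lifted rectangle, and conversely. The only difference is ordering: you force the lift first and then compare the empty-rectangle conditions, whereas the paper first shows that the projection of a boxed occurrence is boxed and then proves that the boxed lift is unique.
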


\begin{proof}
Call the two entries replacing $x$ the $x$-block.  A $2143$ occurrence in
$\Delta(\pi)$ uses at most one entry from any block.  Indeed, two entries from
one block would be adjacent selected entries in increasing order.  The only
adjacent ascent in $2143$ is from its second selected entry, the minimum, to
its third, the maximum; two consecutive values cannot play these roles while
leaving two further selected values strictly between them.

Write $\varphi$ for the projection that sends a selected entry of the
$x$-block to $x$; it is well defined on the selected entries by the previous
paragraph, since each block contributes at most one of them.  It follows that
the four selected blocks of any $2143$ occurrence project under $\varphi$ to
four entries $a,b,c,d$ of $\pi$, in positional order, with
$b<a<d<c$.  If the occurrence in $\Delta(\pi)$ is boxed, then the projected
occurrence is boxed as well: any intervening entry $x$ of $\pi$ with
$b<x<c$ would place both entries of its block inside the bounding rectangle
in $\Delta(\pi)$.

Conversely, let $a,b,c,d$ be a boxed $2143$ occurrence of $\pi$.  It has the
unique boxed lift
\begin{equation}\label{eq:2143-unique-lift}
 2a,\quad 2b,\quad 2c-1,\quad 2d-1.
\end{equation}
To see uniqueness, choosing $2a-1$ would put its mate $2a$ immediately to the
right inside the rectangle; choosing $2b-1$ would put $2b$ inside; choosing
$2c$ would put $2c-1$ inside; and choosing $2d$ would put $2d-1$ inside.
The choices in \eqref{eq:2143-unique-lift} avoid these obstructions.  Their
unselected mates lie, respectively, to the left of the rectangle, below it,
above it, and to its right.  Any other point inside the lifted rectangle would
belong to an intervening block $x$ with $b<x<c$, contradicting that the
original occurrence was boxed.  Thus $\varphi$ and this unique lift are
mutually inverse maps between the boxed $2143$ occurrences of $\Delta(\pi)$
and those of $\pi$, proving \eqref{eq:2143-doubling-count}.
\end{proof}

Thus increasing doubling itself creates no additional boxed $2143$ occurrences.
It supplies a canonical alternating extension of length $2m$ whenever the
prescribed odd subsequence already avoids the pattern.  This is not yet the
strict length-$(2m-1)$ completion required by Conjecture \ref{conj:interleave}; for a
general input, the remaining task is to arrange the even entries so as to
destroy the projected occurrences without creating new ones.

\section{Boxed \texorpdfstring{$12$}{12} and strong Bruhat covers}
\label{sec:box12}

We finish the structural part of the paper with a distributional observation
about the smallest nontrivial boxed pattern, $\B(12)=
 \pattern{scale=0.52}{2}{1/1,2/2}{1/1}$. The avoidance sequence of this pattern is immediate: for each \(n\), the
decreasing permutation \(n(n-1)\cdots 1\) is its unique avoider.  However, to our
knowledge no general formula for its distribution is known, either in the
mesh-pattern literature or in the literature on the strong Bruhat order,
where only the extremal values and the mean of the statistic appear; see
Corollary \ref{cor:box12-moments}.  The statistic has a natural
interpretation in a different area of algebraic combinatorics.

For $\pi\in\mathfrak S_n$, let $\ell(\pi)=|\{(i,j):1\le i<j\le n,\ \pi_i>\pi_j\}|$ be its inversion number.  If $1\le i<j\le n$, let $\pi(i\leftrightarrow j)$
be obtained by interchanging the entries in positions $i$ and $j$.  The
\emph{strong Bruhat order} on $\mathfrak S_n$ is the transitive closure of the
covering relations
\[
 \pi\lessdot \pi(i\leftrightarrow j)
 \quad\Longleftrightarrow\quad
 \ell\bigl(\pi(i\leftrightarrow j)\bigr)=\ell(\pi)+1.
\]
The \emph{up-degree} of $\pi$ is $d_+(\pi)=|\{\sigma\in\mathfrak S_n:\pi\lessdot\sigma\}|$;
the down-degree $d_-(\pi)$ is defined analogously by counting
$\sigma\lessdot\pi$.  See Adin and Roichman \cite{AdinRoichman2006} for these
statistics and Bouvel, Ferrari and Tenner
\cite{BouvelFerrariTenner2024} for descriptions of Bruhat covering relations
in mesh-pattern language.

Let $b(\pi)$ denote the number of occurrences of $\B(12)$ in $\pi$.

\begin{proposition}\label{prop:box12-bruhat}
For every permutation $\pi$, $b(\pi)=d_+(\pi)$.
\end{proposition}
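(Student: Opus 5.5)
We plan to prove the statement by a direct bijection between boxed $12$ occurrences in $\pi$ and upward covers of $\pi$ in the strong Bruhat order. The bijection sends an occurrence to the transposition of its two positions. The first step is to unpack the definition of an occurrence of $\B(12)$. A pair of positions $i<j$ gives such an occurrence exactly when $\pi_i<\pi_j$ and no $k$ with $i<k<j$ satisfies $\pi_i<\pi_k<\pi_j$. The single shaded cell $(1,1)$ is precisely the open rectangle spanned by the two selected points, so this is immediate from the mesh definition in \cref{sec:definitions}.

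The second step is the standard Bruhat cover criterion. We will show that $\ell(\pi(i\leftrightarrow j))=\ell(\pi)+1$ holds if and only if $\pi_i<\pi_j$ and no $k$ strictly between $i$ and $j$ has a value strictly between $\pi_i$ and $\pi_j$. To do this, we compute the change in inversions when positions $i<j$ are swapped.

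Pairs not involving $i$ or $j$ are unaffected. Pairs involving $i$ or $j$ together with a position $k$ outside $[i,j]$ are also unaffected, since the set $\{\pi_i,\pi_j\}$ is preserved and only its placement inside $[i,j]$ changes. The pair $(i,j)$ itself changes by $+1$ if $\pi_i<\pi_j$ and by $-1$ otherwise. For $i<k<j$, suppose $\pi_i<\pi_j$. If $\pi_k$ lies outside $[\pi_i,\pi_j]$, it contributes no change. If $\pi_k$ lies strictly between, it contributes $+2$, because both $(i,k)$ and $(k,j)$ become inversions.

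Hence, when $\pi_i<\pi_j$, we get $\ell(\pi(i\leftrightarrow j))-\ell(\pi)=1+2m$, where $m$ is the number of intervening entries with intermediate value. When $\pi_i>\pi_j$, the length decreases, by the symmetric count. Thus the swap is a cover exactly when $\pi_i<\pi_j$ and $m=0$.

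The final step is counting. Every cover $\sigma$ of $\pi$ has the form $\pi(i\leftrightarrow j)$ for a unique pair $i<j$, since distinct transpositions give distinct permutations. Together with the criterion above, this gives a bijection between boxed $12$ occurrences and upward covers, so $b(\pi)=d_+(\pi)$.

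The main obstacle is modest. It is the careful bookkeeping of the inversion change for intermediate positions, together with the remark that the covering relation as stated in the paper uses only transpositions. That is, we must make sure the definition of $d_+$ counts exactly the $\sigma$ of the form $\pi(i\leftrightarrow j)$ with the length increasing by one. This is how the order is defined above, so no appeal to the general theory of Coxeter groups is needed.
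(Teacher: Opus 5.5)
Your proposal is correct and matches the paper's proof: both show that swapping positions $i<j$ with $\pi_i<\pi_j$ raises the inversion number by $1+2m$, where $m$ counts intervening entries of intermediate value. Hence the swap is an upward cover exactly when the pair is a $\B(12)$ occurrence. You also spell out details the paper leaves implicit: the bookkeeping for positions outside $[i,j]$, the case $\pi_i>\pi_j$, and the fact that distinct pairs give distinct covers.
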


\begin{proof}
Choose $i<j$ with $a=\pi_i<\pi_j=b$.  Interchanging $a$ and $b$ changes the
inversion number by $1+2|\{k:i<k<j,\ a<\pi_k<b\}|$. It therefore gives an upward Bruhat cover exactly when no entry strictly
between positions $i$ and $j$ has value strictly between $a$ and $b$.  This is
precisely the empty-cell condition for the selected pair to be an occurrence
of $\B(12)$.
\end{proof}

\begin{remark}
Kitaev and Liese \cite[Theorem~12]{KitaevLiese2013} proved that $b$ has
Catalan distribution on $\Av_n(132)$: for $0\le k\le n-1$,
\[
 \bigl|\{\pi\in\Av_n(132):b(\pi)=k\}\bigr|
 =C(n-1,k)
 =\frac{n-k}{n}\binom{n-1+k}{n-1},
\]
where $C(r,k)$ denotes the $(r,k)$ entry of Catalan's triangle.  In view of
\cref{prop:box12-bruhat}, this also gives the distribution of the strong
Bruhat up-degree on the Catalan class $\Av_n(132)$.
\end{remark}

In the notation of \eqref{eq:general-distribution-polynomial}, define its
distribution polynomial by
\[
 D_n(q)=F_{n,12}(q)=\sum_{\pi\in\mathfrak S_n}q^{b(\pi)}.
\]
Complementation interchanges up-degree and down-degree.  Consequently the
results of Adin and Roichman
\cite[Propositions~2.1 and~2.9, and Theorem~4.1]{AdinRoichman2006}
translate as follows.

\begin{corollary}\label{cor:box12-moments}
For $n\ge1$, we have
\begin{align}
 \deg D_n(q)&=\left\lfloor\frac{n^2}{4}\right\rfloor,
 \label{eq:box12-degree}\\
 [q^{\lfloor n^2/4\rfloor}]D_n(q)
 &=
 \begin{cases}
  n,&n\text{ odd},\\
  n/2,&n\text{ even},
 \end{cases}
 \label{eq:box12-leading}\\
 \frac{D_n'(1)}{n!}
 &= (n+1)H_n-2n,
 \label{eq:box12-mean}
\end{align}
where \(H_n=\sum_{j=1}^n 1/j\) is the \(n\)-th harmonic number.
\end{corollary}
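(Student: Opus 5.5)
The plan is to transfer the Adin--Roichman results on the strong Bruhat order through \cref{prop:box12-bruhat} and complementation. By \cref{prop:box12-bruhat}, $b(\pi)=d_+(\pi)$. The complement map $\pi\mapsto\pi^{\mathrm c}$ is a bijection of $\mathfrak S_n$ satisfying $\ell(\pi^{\mathrm c})=\binom n2-\ell(\pi)$. It sends transpositions $\pi(i\leftrightarrow j)$ to $\pi^{\mathrm c}(i\leftrightarrow j)$, so it reverses the Bruhat order and gives $d_+(\pi)=d_-(\pi^{\mathrm c})$. Consequently $D_n(q)=\sum_\pi q^{d_+(\pi)}=\sum_\pi q^{d_-(\pi)}$, and the up- and down-degree distributions coincide. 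This means any statement of Adin and Roichman phrased for either degree applies directly to $b$.

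For \eqref{eq:box12-degree} and \eqref{eq:box12-leading}, I will cite \cite[Proposition~2.9 / Theorem~4.1]{AdinRoichman2006}. These give the maximum degree in the Bruhat graph of $\mathfrak S_n$ as $\lfloor n^2/4\rfloor$, with the number of maximizers equal to $n$ for odd $n$ and $n/2$ for even $n$. As a sanity check I would verify small cases directly. For $n=2$, the identity has $b=1$, giving degree $1$ and one maximizer. For $n=3$, the permutations $123,213,132$ have $b$ equal to $2,2,2$, which gives three maximizers of degree $2=\lfloor 9/4\rfloor$, matching $n=3$ odd. The precise matching of the index conventions in \cite{AdinRoichman2006} is the step I would check carefully, since their maximizer count may be stated for up-degree, down-degree, or total degree.

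For the mean \eqref{eq:box12-mean}, I would give the short direct proof, since it is simply the case $k=2$ of \cref{prop:boxed-first-moment}. Substituting $k=2$ and $H_0=0$ into \eqref{eq:boxed-first-moment} gives $(n+1)H_n-2n$. The derivation behind that formula is elementary. A pair of positions at distance $d$ is an occurrence with probability $d!/(d+1)!=1/(d+1)$, and there are $n-d$ such pairs. Hence
\[
\sum_{d=1}^{n-1}\frac{n-d}{d+1}=(n+1)(H_n-1)-(n-1)=(n+1)H_n-2n .
\]
Together with \eqref{eq:first-moment-derivative}, this proves the claim.

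The main obstacle is only bookkeeping in the citation step, namely confirming that the Adin--Roichman maximizer count refers to the up-degree (equivalently, by the complement symmetry above, the down-degree) and not to the total degree. If there is any doubt, I would instead prove \eqref{eq:box12-degree} and \eqref{eq:box12-leading} via their characterization of maximizers.
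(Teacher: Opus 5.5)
Your proposal is correct and follows essentially the paper's own route: you identify $b$ with $d_+$ via \cref{prop:box12-bruhat}, use complementation to pass to the down-degree treated by Adin and Roichman, and obtain the mean either as the $k=2$ case of \cref{prop:boxed-first-moment} or by the same span computation $\sum_{d=1}^{n-1}(n-d)/(d+1)$ that the paper gives. One bookkeeping correction: the paper cites \cite[Propositions~2.1 and~2.9]{AdinRoichman2006} for the maximum and the number of maximizers and \cite[Theorem~4.1]{AdinRoichman2006} for the mean, so your citation of Theorem~4.1 for the maximum is misplaced; also, these are degrees in the Hasse diagram of the Bruhat order, not in the Bruhat graph, which is regular of degree $\binom n2$.
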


\begin{remark}
The familiar stronger algebraic properties do not hold in general.  Already
\[
 D_3(q)=1+2q+3q^2
\]
has nonreal zeros.  Moreover, for every $n\ge3$ the constant coefficient is
$1$, whereas \eqref{eq:box12-leading} shows that the leading coefficient is
greater than $1$.  Thus $D_n(q)$ is not palindromic, ruling out
$\gamma$-positivity in its usual palindromic sense.  Log-concavity also fails:
the polynomial $D_7(q)$ recorded in \cite{SuKitaevZhang2026} has $[q^{10}]D_7(q)=129$, $[q^{11}]D_7(q)=26$, $[q^{12}]D_7(q)=7$,
 and $26^2<129\cdot7$.  On the other hand, exhaustive enumeration gives a
unimodal coefficient sequence for every $D_n(q)$ with $n\le11$.
\end{remark}

This evidence suggests that unimodality may be the appropriate surviving
property.

\begin{conjecture}
\label{conj:box12-unimodal}
For every $n\ge0$, the coefficient sequence of $D_n(q)$ is unimodal.
\end{conjecture}

\begin{remark}
Interestingly, harmonic numbers also describe the complete distribution of a
rather different mesh pattern.  Let
\[
 p=
 \pattern{scale=0.50}{4}{1/2,2/1,3/3,4/4}%
 {0/0,0/1,0/2,0/3,0/4,1/0,1/4,2/0,2/4,3/0,3/4,%
  4/0,4/1,4/2,4/3,4/4}
\]
be the border mesh pattern studied in \cite{KitaevLiese2013}.  If $p_{n,k}$ denotes
the number of permutations in $\mathfrak S_n$ with exactly $k$ occurrences of
$p$, then \cite[Theorems~2 and~3]{KitaevLiese2013}, for $n\ge4$,
\[
 p_{n,k}=(n-2)!\bigl(H_{n-2}-H_k\bigr)
 \qquad(1\le k\le n-3),
\]
whereas $p_{n,0}=(n-2)!\bigl(H_{n-2}+n^2-2n+2\bigr)$.
\end{remark}

Equation \eqref{eq:box12-mean} is also the $k=2$ case of
Proposition \ref{prop:boxed-first-moment}.  The following direct derivation refines it by
the positional span of an occurrence.  Fix positions $i<j$ and put
$d=j-i$.  In the block $\pi_i\cdots\pi_j$ the two endpoint ranks are a
uniformly distributed ordered pair of distinct elements of $[d+1]$.  They
form a boxed $12$ occurrence exactly when the right endpoint has rank one
larger than the left endpoint.  There are $d$ such pairs out of $d(d+1)$, so
\[
 \Pr\bigl((i,j)\text{ is a }\B(12)\text{-occurrence}\bigr)=\frac1{d+1}.
\]
Linearity of expectation gives
\[
 \mathbb E[b(\pi)]
 =\sum_{d=1}^{n-1}\frac{n-d}{d+1}
 =(n+1)H_n-2n.
\]
Equivalently, the exponential generating function for the total number of
occurrences is
\begin{equation}\label{eq:box12-total-egf}
 \sum_{n\ge0}\frac{D_n'(1)}{n!}x^n
 =\frac{-\log(1-x)-x}{(1-x)^2}.
\end{equation}

Maximum insertion gives an exact identity for the full distribution, although
it does not close in the single sequence of polynomials $D_n(q)$.  For a word
$w=w_1\cdots w_t$ with distinct entries, let $\operatorname{rlmax}(w)
 =|\{i\in[t]:w_i>w_j\text{ for every }i<j\le t\}|$ be its number of right-to-left maxima, and set
$\operatorname{rlmax}(\varepsilon)=0$.

\begin{proposition}\label{prop:box12-insertion}
For $\pi\in\mathfrak S_n$ and $0\le t\le n$, let $\pi^{(t)}=\pi_1\cdots\pi_t\,(n+1)\,
             \pi_{t+1}\cdots\pi_n$. Then
\begin{equation}\label{eq:box12-insertion}
 b(\pi^{(t)})=b(\pi)+
 \operatorname{rlmax}(\pi_1\cdots\pi_t).
\end{equation}
Consequently,
\begin{equation}\label{eq:box12-distribution-insertion}
 D_{n+1}(q)=
 \sum_{\pi\in\mathfrak S_n}q^{b(\pi)}
 \sum_{t=0}^n q^{\operatorname{rlmax}(\pi_1\cdots\pi_t)}.
\end{equation}
\end{proposition}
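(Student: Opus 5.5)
The plan is to compare the $\B(12)$ occurrences of $\pi^{(t)}$ with those of $\pi$ by splitting them according to whether they use the inserted maximum $M=n+1$. The first step is to show that every pair of entries of $\pi$ is an occurrence of $\B(12)$ in $\pi^{(t)}$ exactly when it is one in $\pi$. The argument is that any such occurrence in $\pi$ has values at most $n$. Its empty rectangle therefore has upper value bound strictly below $M$, so inserting $M$ cannot place a point inside it. Conversely, the rectangle of a pair not using $M$ in $\pi^{(t)}$ is the same set of positions and values, relabelled by a shift, with $M$ outside it. This is the same observation the paper uses when it says that a global extremum lies outside the vertical bounding interval of any occurrence avoiding it. Hence the occurrences avoiding $M$ contribute exactly $b(\pi)$.

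The second step counts the occurrences that use $M$. Since $M$ is the largest value, it can only play the role of the $2$, so it must be the right entry of the pair. Its partner is then some $\pi_i$ with $i\le t$. The rectangle spans the positions strictly between $i$ and the position of $M$, and the values strictly between $\pi_i$ and $M$. This rectangle is empty exactly when no $\pi_j$ with $i<j\le t$ exceeds $\pi_i$, that is, when $\pi_i$ is a right-to-left maximum of $\pi_1\cdots\pi_t$. The number of such partners is therefore $\operatorname{rlmax}(\pi_1\cdots\pi_t)$, and the convention $\operatorname{rlmax}(\varepsilon)=0$ handles $t=0$. Adding the two counts gives \eqref{eq:box12-insertion}.

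Finally, for \eqref{eq:box12-distribution-insertion} I would use the standard bijection $\mathfrak S_n\times\{0,\ldots,n\}\to\mathfrak S_{n+1}$, $(\pi,t)\mapsto\pi^{(t)}$, whose inverse deletes $n+1$ and records its position. Summing $q^{b(\pi^{(t)})}$ over this bijection and factoring $q^{b(\pi)}$ gives the identity. There is no real obstacle here. The only point needing care is the first step, namely checking rigorously that $M$ never falls inside an old rectangle, and this follows from maximality of its value.
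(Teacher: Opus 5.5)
Your proposal is correct and follows essentially the same route as the paper's proof. Occurrences avoiding the new maximum $n+1$ are unchanged, and an occurrence using it must end at $n+1$ with a partner that is a right-to-left maximum of $\pi_1\cdots\pi_t$. Summing over the bijection $(\pi,t)\mapsto\pi^{(t)}$ then gives \eqref{eq:box12-distribution-insertion}.
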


\begin{proof}
Inserting the new maximum neither creates nor destroys an occurrence using
only old entries.  A new occurrence must end at $n+1$.  Its first entry
$\pi_i$, where $i\le t$, forms a boxed $12$ occurrence with $n+1$ exactly
when no later entry of the prefix $\pi_1\cdots\pi_t$ is larger than $\pi_i$,
that is, exactly when $\pi_i$ is a right-to-left maximum of the prefix.  This
proves \eqref{eq:box12-insertion}; summing over the $n+1$ insertion sites and
all parents proves \eqref{eq:box12-distribution-insertion}.
\end{proof}

Formula \eqref{eq:box12-distribution-insertion} shows both the promise and the
difficulty of maximum insertion: a closed recurrence must retain the joint
distribution of $b(\pi)$ with the right-to-left-maximum counts of all
prefixes.  We do not know a finite catalytic refinement that accomplishes
this, and the full polynomial $D_n(q)$ remains open.

\section{Concluding directions}

The growth classification is complete except for one length-four orbit.
Theorem~\ref{thm:global-growth}, together with the factorial construction of
\cite{AKV2013}, shows that no boxed pattern of length at least five has the
Stanley--Wilf property. In length four, twenty patterns have factorial lower
bounds, the orbit $\{2413,3142\}$ has semi-Baxter and hence exponential
growth, and only $\{2143,3412\}$ remains unresolved. We conjecture that this orbit has factorial
growth; see Conjecture~\ref{conj:2143-factorial}.

The coincidence results also reveal a sharp length boundary.
Theorem~\ref{thm:vincular-classification} shows that boxed--vincular
coincidences stop in length four, while Theorem~\ref{thm:bivincular-bound}
shows that the genuinely bivincular coincidence in
Corollary~\ref{cor:bivincular} is also maximal in length. The
interval-consecutive criterion provides a complementary structural viewpoint:
boxed $\tau$ avoidance is consecutive $\tau$ avoidance on every interval of
values. Moreover, Proposition~\ref{prop:interval-factor} transfers factorial
lower bounds from interval factors to larger boxed patterns.

For boxed $123$, Theorem~\ref{thm:active} gives an exact maximum-insertion
identity and five new terms, while Theorem~\ref{thm:box123-upper} bounds its
growth exponent by a constant below $0.85$. A closed enumeration remains
open. Proposition~\ref{prop:boxed-first-moment} also shows that all boxed
patterns of a fixed length have the same first moment; in particular, the
expected number of boxed $123$ occurrences is asymptotic to $n\log n$.

Finally, for boxed $12$, Proposition~\ref{prop:box12-bruhat} identifies the occurrence
statistic with up-degree in the strong Bruhat order. Although
real-rootedness, $\gamma$-positivity and log-concavity fail in small lengths,
the available data motivate Conjecture~\ref{conj:box12-unimodal}, which asserts
unimodality of the distribution polynomials.

\section*{Declaration of generative AI and AI-assisted technologies in the
manuscript preparation process}

During the preparation of this work the authors used
GPT (OpenAI) in order to assist with language editing, exploratory testing of
conjectures, proof development, and the drafting of parts of the paper.  After using this tool, the authors reviewed, corrected, and verified the
resulting text, arguments, and references, and take full responsibility for
the content of the manuscript.

\end{document}